\newtheorem{theorem}{Theorem}[section]
\newtheorem{lemma}[theorem]{Lemma}
\newtheorem{proposition}[theorem]{Proposition}
\theoremstyle{definition}
\newtheorem{definition}[theorem]{Definition}
\newtheorem{remark}[theorem]{Remark}
\newtheorem{example}[theorem]{Example}
\numberwithin{equation}{section}
\newcommand{\dsps}{\displaystyle}
\newcommand{\calF}{\mathcal{F}}
\newcommand{\calJ}{\mathcal{J}}
\newcommand{\calO}{\mathcal{O}}
\newcommand{\bOmega}{\mathsf{\Omega}}
\newcommand{\AAA}{\mathbb{A}}
\newcommand{\CC}{\mathbb{C}}
\newcommand{\NN}{\mathbb{N}}
\newcommand{\PP}{\mathbb{P}}
\newcommand{\RR}{\mathbb{R}}
\newcommand{\Cp}{\CC_p}
\newcommand{\Cv}{\CC_v}
\newcommand{\PCv}{\PP^1(\Cv)}
\newcommand{\Berk}{\textup{an}}
\newcommand{\ABerk}{\AAA^1_{\Berk}}
\newcommand{\PBerk}{\PP^1_{\Berk}}
\newcommand{\DBerk}{D_{\Berk}}
\newcommand{\DbarBerk}{\overline{D}_{\Berk}}
\newcommand{\Dbar}{\overline{D}}
\newcommand{\FBerkf}{\calF_{\Berk,f}}
\newcommand{\FBerkg}{\calF_{\Berk,g}}
\newcommand{\JBerk}{\calJ_{\Berk}}
\newcommand{\JBerkf}{\calJ_{\Berk,f}}
\newcommand{\JBerkg}{\calJ_{\Berk,g}}
\newcommand{\nat}{\natural}
\DeclareMathOperator{\diam}{diam}
\DeclareMathOperator{\sphdiam}{sphdiam}
\DeclareMathOperator{\PGL}{PGL}
\DeclareMathOperator{\Rat}{Rat}
\DeclareMathOperator{\CP}{CP}
\newcommand{\HBerk}{\mathbb{H}^1_{\Berk}}
\newcommand{\dd}[1]{D(#1)} 
\newcommand{\ddb}[1]{\DBerk(#1)} 
\newcommand{\sk}[1]{\|#1\|} 
\newcommand{\sd}[2]{#1^{\nat}(#2)} 
\newcommand{\spd}[1]{\sphdiam(#1)} 
\newcommand{\F}[2]{#1(#2)} 
\newcommand{\rrho}[1]{\rho(#1)} 
\definecolor{green}{rgb}{.1,.4,.1}
\definecolor{blue}{rgb}{.2,.6,.75}
\begin{document}

\title[J-Stability]{J-Stability in non-archimedean dynamics}


\author{Robert L. Benedetto}
\address{Amherst College, Amherst, MA 01002, USA}
\email{rlbenedetto@amherst.edu}
\author{Junghun Lee}
\address{Department of Mathematics, Tokyo Institute of Technology, Tokyo, Japan}
\email{lee.j.ba@m.titech.ac.jp}


\subjclass[2010]{Primary 37P40, Secondary 11S82}

\date{October 8, 2021}

\begin{abstract}
Let $\Cv$ be a complete, algebraically closed non-archimedean field,
and let $f \in\Cv(z)$ be a rational function of degree $d \geq 2$.
If $f$ satisfies a bounded contraction condition on its Julia set, we
prove that small perturbations of $f$ have dynamics conjugate to those of $f$ on their Julia sets.
\end{abstract}

\maketitle


\section{Introduction}

Fix the following notation throughout this paper.

\begin{tabbing}
\hspace{5mm} \= \hspace{17mm} \= \kill  
\> $\Cv$ \> an algebraically closed field of characteristic zero. \\
\> $|\cdot|$ \> a nontrivial non-archimedean absolute value on $\Cv$, \\ 
\> \> with respect to which $\Cv$ is complete. \\
\> $\NN$ \> the set $\{1,2,3,\ldots\}$ of positive integers. \\
\> $\NN_0$ \> $\NN\cup\{0\}$.
\end{tabbing}

The Berkovich projective line $\PBerk$ is a natural compactification
of the classical projective line $\PCv=\Cv\cup\{\infty\}$, which we describe
in greater detail in Section~\ref{sec:berksp}.
We consider the dynamics of a rational function $f\in\Cv(z)$ on $\PCv$ and on $\PBerk$.
That is, writing $f^0(z)=z$ and $f^{n+1}=f\circ f^n$ for all $n\in\NN_0$, we consider the action of the
iterates $f^n$ on $\PCv$ and $\PBerk$.
See \cite[Chapter 10]{BR10}, \cite{BenBook}, or \cite[Chapter 5]{Silv07}
for more thorough treatments of such non-archimedean dynamics.
We will be especially interested in the case that two such maps $f,g\in\Cv(z)$ are 
conjugate on a subset of $\PBerk$;
more precisely, there is some invertible map $h:V\to V$
such that $h\circ f|_W = g\circ h|_W$, where
$W=f^{-1}(V)\subseteq V\subseteq\PBerk$.

A rational function $f\in\Cv(z)$ may be written as $f=F/G$ for relatively prime polynomials $F,G\in\Cv[z]$.
We define the degree of $f$ to be $\deg f := \max\{\deg F, \deg G\}$. Every point of $\PCv$ has $\deg f$ preimages
under $f$, counted with multiplicity. For any integer $d\geq 2$, we define
\[ \Rat_d(\Cv) := \big\{ f\in \Cv(z) \, \big| \, \deg f =d \big\} \]
to be the set of rational functions of degree $d$, defined over $\Cv$, with the topology induced from
the natural inclusion of $\Rat_d(\Cv)$ in $\PP^{2d+1}(\Cv)$, which maps $f$ to the $(2d+2)$-tuple of its coefficients.

The main result of this paper is motivated by Ma\~n\'e, Sad, and Sullivan's result \cite{MSS83} in complex dynamics. 
They introduced the notion of $J$-stability of a rational map $f\in\CC(z)$,
a property which, roughly speaking, means that the dynamics of all maps $g$ in some neighborhood
of $f$ in $\Rat_d(\CC)$ are conjugate on their Julia sets.
In particular, they showed that a rational map is $J$-stable if it
is expanding on its Julia set.
For more discussion of stability in the complex setting, see also
\cite[Section~4.1]{McMull94} and \cite[Sections~7--8]{MS98}. 

Motivated by the results of \cite{MSS83}, T.~Silverman \cite{TSil17}
proved a non-archimedean stability result for one-parameter families
via a condition on the Berkovich analytification of the appropriate moduli space.
In a different direction, the second author \cite{L18} investigated non-archimedean
rational functions $f\in\Cv(z)$ acting on $\PCv$, proving that $f$ is $J$-stable if it is
expanding in a sense parallel to that in complex dynamics.
Specifically, as in \cite[Definition~1.1]{L18}, the map $f\in\Cv$ is
\emph{expanding} on its (type~I) Julia set $J_f:=\JBerkf\cap\PCv$
if $J_f$ is nonempty and there exist real constants $c>0$ and $\lambda>1$ so that
\begin{equation}
\label{eq:expand}
\big(f^n\big)^{\nat}(z) \geq c\lambda^n
\quad\text{for every } z\in \JBerkf\cap\PCv  \text{ and } n\in\NN, 
\end{equation}
where $g^{\nat}$ denotes the spherical derivative of $g\in\Cv(z)$, defined in Section~\ref{sec:prelimSD}.
(See also Remark~\ref{rem:expand}.)
In a different context \cite{B01b}, the first author had previously studied a slightly weaker version of this condition
for the case that $\Cv=\Cp$ and $f$ is defined over a locally compact subfield $K$ of $\Cv$.
(Specifically, such a map $f$ is \emph{hyperbolic} if for each finite extension $L/K$, there exist
$c=c_L>0$ and $\lambda=\lambda_L>1$ such that condition~\eqref{eq:expand} holds for all $z\in J_f\cap\PP^1(L)$.)
However, besides the fact that the results of \cite{L18} apply only to the type~I Julia set $J_f$,
both the expanding and the hyperbolic hypotheses are unnecessarily restrictive, as we illustrate in Section~\ref{sec:ex}.


In this paper, we strengthen the main result of \cite{L18} both by generalizing the expanding hypothesis
of equation~\eqref{eq:expand}
and by extending the resulting conjugacy from the classical Julia set in $\PCv$ to the Berkovich Julia set
$\JBerkf\subseteq\PBerk$ of the map $f$. (See Section~\ref{sec:berkdyn} for more on the Berkovich Julia set.)
Moreover, we construct our conjugacy not only on $\JBerkf$, but also 
on an appropriate neighborhood of $\JBerkf \cap \PCv$ in $\PBerk$. 
As in~\cite{L18}, our statement involves the spherical derivative $f^{\nat}$ of the rational function $f$,
but extended to the Berkovich space $\PBerk$, as described in Section~\ref{sec:prelimSD},
and with a less restrictive hypothesis. Our extension to $\PBerk$ also allows us to avoid the assumption
that $\JBerkf\cap\PCv\neq\varnothing$ required in both \cite{L18} and \cite{TSil17}.
On the other hand, although we prove that our conjugacy
varies continuously with the map $g\in\Rat_d(\Cv)$,
our method does not yield analytic motions
of Julia sets as in \cite{TSil17}, in part because
we do not consider nonclassical Berkovich points in
the moduli space $\Rat_d$. 

\begin{theorem}\label{MAIN}
Let $f\in\Cv(z)$ be a rational function of degree $d\geq 2$
with Berkovich Julia set $\JBerkf$.
Suppose there exists $\delta >0$ such that
\[ \big(f^n \big)^{\nat}(\zeta) \geq \delta \quad \text{for all } \zeta\in\JBerkf
\text{ and } n\in\NN.  \]
Then there exist a neighborhood $W \subseteq \Rat_d(\Cv)$ of $f$
and an open set $U\subseteq\PBerk$ containing $\JBerkf\cap\PCv$
with the following properties.
For each $g \in W$, there is a homeomorphism
$h:\PBerk\to\PBerk$
for which
\begin{enumerate}
\item $h$ is an isometry on the set $\PCv$ of type~I points ,
\item $h$ is the identity map on $\PBerk \smallsetminus U$, and
\item $h\circ f(\zeta) = g\circ h(\zeta)$
for all $\zeta\in U\cup\JBerkf$.
\end{enumerate}
Moreover, the map $(g,\zeta)\mapsto h(\zeta)$
is a continuous function from $W\times\PBerk$ to $\PBerk$. 
\end{theorem}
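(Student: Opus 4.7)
The plan is to construct $h$ by an orbit-matching procedure: for each point $\zeta$ near $\JBerkf$, $h(\zeta)$ is defined as the essentially unique point whose $g$-orbit shadows the $f$-orbit of $\zeta$. The bounded-contraction hypothesis $(f^n)^\nat(\zeta) \ge \delta$ bounds the Lipschitz constants of the inverse branches of $f^n$ by $\delta^{-1}$, which both limits how far $h(\zeta)$ can stray from $\zeta$ and makes the matching stable under small perturbations of $f$.

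I would proceed as follows. First, translate the analytic hypothesis into a geometric statement about Berkovich disks: using continuity of $\zeta \mapsto f^\nat(\zeta)$ and compactness of $\JBerkf$, obtain a uniform radius $r_0 > 0$ so that, for every $\zeta_0 \in \JBerkf \cap \PCv$ and every $n \in \NN$, the Berkovich disk of spherical radius $r_0$ around $\zeta_0$ is mapped injectively by $f^n$ onto a disk of spherical diameter at least $\delta r_0$. A finite subcollection of such disks covers $\JBerkf \cap \PCv$ and defines the open neighborhood $U$. Second, use continuity of the coefficient map $g \mapsto g^\nat$ to produce a neighborhood $W$ of $f$ in $\Rat_d(\Cv)$ such that every $g \in W$ inherits analogous behavior on the cover, with inverse branches close to those of $f$. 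Third, for each $\zeta \in U \cup \JBerkf$, record the itinerary of $\zeta$ in the cover and apply the corresponding inverse branches of $g^n$ to $f^n(\zeta)$: the result is a nested decreasing sequence of Berkovich disks whose intersection, by completeness of $\PBerk$, is a single Berkovich point, which we declare to be $h(\zeta)$. Extend $h$ by the identity on $\PBerk \smallsetminus U$. Finally, the three listed properties all follow from the construction: the conjugacy from the inverse-branch definition, the isometry on $\PCv$ because $h$ preserves the radii of the defining disks when $g$ is close enough to $f$, and continuity of $(g,\zeta) \mapsto h(\zeta)$ because the nested intersections depend continuously on both parameters.

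The main obstacle is ensuring that the nested intersection in the third step collapses to a single Berkovich point rather than a disk of positive diameter. Under mere bounded contraction, the pullback disks need not shrink from one step to the next; the shrinkage instead must come from the orbit visiting substantially different parts of the cover, so that successive intersections are strictly nested. Verifying this requires a transitivity-type argument leveraging the definition of the Julia set (points there cannot belong to a forward-invariant attracting disk), together with the fact that $(f^n)^\nat \ge \delta$ is assumed for \emph{all} $n$ simultaneously rather than merely along a subsequence. Handling nonclassical Berkovich points of types~II--IV in $\JBerkf$ is technically more delicate, since the relevant ``disks'' become Berkovich subtrees and the intersection argument must be phrased tree-theoretically, though the hypothesis does apply at such points as well.
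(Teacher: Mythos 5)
Your first step contains a claim that does not hold and cannot be repaired at the level of generality you assume: a \emph{uniform} radius $r_0$ for which $f^n$ is injective on every disk of radius $r_0$ about a Julia point, \emph{for all} $n$ simultaneously. The hypothesis $(f^n)^{\nat}\geq\delta$ allows $f^{\nat}$ to fluctuate freely above $\delta$ at individual points, so an orbit can first expand substantially (pushing the image of a fixed-radius disk beyond the injectivity radius of $f$) and only later contract back. In that scenario $f^{n}$ is not injective on $D(\zeta,r_0)$ even if $f$ is injective on each $D(\cdot,\epsilon)$ separately, and no compactness argument on a single $f^{\nat}$ can recover this. The paper's construction avoids this precisely by attaching to each Julia point $\zeta$ a \emph{variable} radius $\nu(\zeta)=\delta^2\epsilon/\sigma(\zeta)$ with $\sigma(\zeta)=\inf_n (f^n)^{\nat}(\zeta)$, together with contraction factors $\mu_n(\zeta)$ built from the whole orbit, so that each single application of $f$ carries a disk of radius $\leq\epsilon$ onto one of radius $\leq\epsilon$ (Lemmas~\ref{lem:KL1new}--\ref{lem:KL4new}); nowhere does it need $f^n$ to be injective on a disk of $n$-independent size. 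Relatedly, your appeal to continuity of $\zeta\mapsto f^{\nat}(\zeta)$ on $\PBerk$ and to a \emph{finite} cover of $\JBerkf\cap\PCv$ is shaky: $\JBerkf\cap\PCv$ is not compact in $\PBerk$, and the paper works instead with the infinite union $\bOmega$ over $\JBerkf^0$.

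The second issue is that you correctly identify the ``main obstacle'' --- the nested pullback disks need not shrink at a uniform rate, and showing they collapse to a single point requires a transitivity argument using the Julia set --- but you defer it rather than carry it out. That step is the content of Lemma~\ref{lem:KL5} and is one of the genuinely delicate parts of the proof; its conclusion ($\bigcap_n f^{-n}(\bOmega)=\JBerkf\cap\Cv$, with $\mu_n(\zeta)\to 0$ there) is exactly what makes the $h_n$ converge on type~I Julia points while remaining eventually stationary on $\HBerk$. Finally, your proposal never addresses why the conjugacy extends from $U$ to $U\cup\JBerkf$. Julia points of large spherical diameter (the set $\JBerkf^+$) lie \emph{outside} $U$, and there $h$ is the identity; the equation $h\circ f=g\circ h$ therefore reduces to $f=g$ on those points, which must be proved separately. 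The paper does this (equations~\eqref{eq:fgbig} and~\eqref{eq:Jplus}) by comparing $\|g-f\|_{\zeta}$ with $\diam(f(\zeta))$, and without some such argument your item~(c) for $\zeta\in\JBerkf\smallsetminus U$ is unjustified.
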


Note in particular that the points of $\JBerkf\smallsetminus U$
are fixed by the map $h$ of Theorem~\ref{MAIN}.
Hence, we have $\JBerkf\smallsetminus U = \JBerkg\smallsetminus U$,
and moreover
$f(\zeta)=g(\zeta)$ for all $\zeta\in\JBerkf\smallsetminus U$. 



The outline of this paper is as follows.
We recall some essentials from non-archimedean analysis and dynamics in Section~\ref{sec:prelim}, and we describe the spherical derivative on $\PBerk$ in Section~\ref{sec:prelimSD}.
Next, we present several necessary lemmas in Sections~\ref{sec:somelemmas} and~\ref{sec:proofs}.
Section~\ref{sec:main} is devoted to the proof of Theorem~\ref{MAIN}.
Finally, in Section \ref{sec:ex}, we present examples of rational maps which satisfy
the hypotheses of Theorem~\ref{MAIN} but which are not expanding in the sense of~\cite{L18}.

\section{Preliminaries}
\label{sec:prelim}

In this section, we recall some relevant facts about dynamics on $\PCv$ and on $\PBerk$.
Here and in the rest of the paper, we set the following notation for disks in $\Cv$.

\begin{tabbing}
\hspace{5mm} \= \hspace{17mm} \= \kill  
\> $D(a, r)$ \> for $a\in \Cv$ and $r>0$, the open disk $\{ x \in \Cv \mid |x - a| < r \}$. \\
\> $\Dbar(a, r)$ \> for $a\in \Cv$ and $r>0$, the closed disk $\{ x \in \Cv \mid |x - a| \leq r \}$. \\
\> $\calO$ \> the ring of integers $\Dbar(0,1)=\{ z \in \Cv \mid |z| \leq 1 \}$ of $\Cv$.
\end{tabbing}

\subsection{The chordal metric}
\label{sec:prelimPL}


The \emph{chordal metric} is the distance function $\rho$ on $\PCv$ given in homogeneous coordinates by
\[ \rho\big( [z_0: z_1], [w_0: w_1] \big) :=
\frac{ |z_0 w_1 - z_1 w_0 | }{\max\{ |z_0|, |w_0|\} \max\{ |z_1|, |w_1| \} }. \]
Equivalently, in affine coordinates we have
\begin{align*}
\rho\big( z, w \big) = 
\frac{ |z - w| }{\max\{ 1, |z| \} \max\{ 1, |w| \} } =
\begin{cases}
| z - w | &\quad \text{if  }z, w \in \calO, \\[2mm]
\dsps \left| \frac{1}{z} - \frac{1}{w} \right| &\quad \text{if } z, w \in \Cv\smallsetminus\calO, \\[2mm]
1 &\quad \text{otherwise.}
\end{cases}
\end{align*}
Any $h\in\PGL(2,\calO)$ is an isometry with respect to the chordal metric.
See \cite[Section~2.1]{Silv07} or \cite[Section~5.1]{BenBook} for more on the chordal metric.

\subsection{Weierstrass degrees of power series}
\label{sec:prelimw}

Let $a\in \Cv$ and $r>0$. A power series
\[ F(z) = \sum_{i=0}^{\infty} c_i (z-a)^i \in \Cv[[z-a]] \]
converges on $D(a,r)$ if and only if 
\[ \lim_{n \to \infty} |c_n| s^n = 0 \quad \text{for all $0 < s < r$.} \]
If $F$ converges on $D(a, r)$, then the derivative of $F$
\[ F'(z) = \sum_{i=1}^{\infty} i c_i (z-a)^{i - 1} \in \Cv[[z-a]] \]
also converges on $D(a,r)$. In particular, $F(a) = c_0$ and $F'(a) = c_1$.

The \emph{Weierstrass degree} of $F$ on $D(a,r)$ is defined to be the smallest $n\in\NN_0$ such that
\[|c_n|r^n = \sup\{ |c_i|r^i \mid i \in \mathbb{N}_0  \}, \]
or $\infty$ if this supremum is never attained.
If $n \in \NN$ is the Weierstrass degree of $F-c_0$ on $D(a, r)$,
then $F$ maps $D(a,r)$ onto the disk $D(c_0, |c_n|r^n )$,
and every point of the latter disk has $n$ preimages in the former, counted with multiplicity.
In particular, $F$ is injective on $D(a,r)$ if and only if $n = 1$, in which case $F(D(a,r)) = D(F(a), |c_1| r)$, and 
\[ |F(x)-F(y)|=|F'(a)| |x-y| \quad \text{for all $x,y \in D(a,r)$}. \]
If $F$ is injective on $D(a, r)$, then $F'$ has no zeros in $D(a,r)$.
However, the converse is not necessarily true if $\Cv$ has positive residue characteristic,
although Lemma~\ref{lem:kappa} shows that $F$ is injective on a smaller disk in that case.

If $f\in \Rat_d(\Cv)$ has no poles in $D(a,r)$, then there exists a convergent power series
$F \in \Cv[[z - a]]$ on $D(a,r)$ such that $F(x) = f(x)$ for all $x \in D(a, r)$.
Thus, the image $f(D(a,r))$ is a disk of the form $D(b,s)$, where $b=f(a)$.
Note that the Weierstrass degree of $F-b$ on $D(a, r)$ is at most $d$.

We refer the reader to \cite[Chapters~3,14]{BenBook} or \cite[Chapter~6]{Rob00} for more details on power series
over non-archimedean fields.

\subsection{The Berkovich projective line}
\label{sec:berksp}

It has become clear that although a significant amount of non-archimedean dynamics can be done
on the classical projective line $\PCv$, the appropriate setting is the Berkovich projective line $\PBerk$.
In this section we summarize some relevant facts about $\PBerk$ and its associated dynamics.
For more details, see \cite[Chapters~1,2,9,10]{BR10} or \cite[Chapters~6--8]{BenBook}.

The Berkovich affine line $\ABerk$ is the set of all multiplicative seminorms on $\Cv[z]$
that extend the absolute value on $\Cv$. That is, $\zeta=\|\cdot\|_{\zeta}$ is a function from
$\Cv[z]$ to $[0,\infty)$ satisfying $\|fg\|_{\zeta}=\|f\|_{\zeta}\|g\|_{\zeta}$,
$\|f+g\|_{\zeta}\leq\max\{\|f\|_{\zeta},\|g\|_{\zeta}\}$, and $\|a\|_{\zeta}=|a|$
for all $f,g\in\Cv[z]$ and $a\in\Cv$.
We will generally write an element of $\ABerk$ as $\zeta$ when we think of it as a point,
and as $\|\cdot\|_{\zeta}$ when we think of it as a seminorm.

There are four types of points in $\ABerk$. Type~I points correspond to the points of $\Cv$,
with $\|f\|_x:=|f(x)|$ for $x\in\Cv$. Points of type~II and~III correspond to closed disks $\Dbar(a,r)$,
with $a\in\Cv$ and $r>0$, where $r\in |\Cv^{\times}|$ gives a point of type~II, and
$r\not\in |\Cv^{\times}|$ gives a point of type~III. In both cases, the corresponding point
$\zeta(a,r)\in\ABerk$ is the sup-norm on the disk $\Dbar(a,r)$.
Finally, type~IV points correspond to descending chains of disks
$D_1\supsetneq D_2\supsetneq \cdots$ with empty intersection.
We denote by $\HBerk:=\ABerk\smallsetminus\Cv$ the subset of points
not of type~I.

We equip $\ABerk$ with the Gel'fand topology, i.e.,
the weakest topology such that for every $f\in\Cv[z]$, the function $\zeta\mapsto\| f\|_{\zeta}$
maps $\ABerk$ continuously to $\RR$. The projective line $\PBerk$ may be formed either by
taking the one-point compactification $\PBerk=\ABerk\cup\{\infty\}$ or by gluing two copies 
of $\ABerk$ via $\zeta\mapsto 1/\zeta$. (The new point $\infty$ is of type~I.)
Then $\PBerk$ is a compact Hausdorff space which contains $\PCv$, the set of type~I points,
as a dense subspace.

For $a\in\Cv$ and $r>0$, the sets
\[ \DBerk(a,r) := \{ \zeta\in\ABerk \, | \, \|z-a\|_{\zeta} < r \}
\quad\text{and}\quad
\DbarBerk(a,r) := \{ \zeta\in\ABerk \, | \, \|z-a\|_{\zeta} \leq r \} \]
are called open and closed Berkovich disks, respectively.
A type~I point $x\in\Cv$ lies in $\DbarBerk(a,r)$ if and only if $x\in \Dbar(a,r)$,
and it lies in $\DBerk(a,r)$ if and only if $x\in D(a,r)$.
A type~II or~III point $\zeta=\zeta(b,s)$ lies in $\DbarBerk(a,r)$
if and only if $\Dbar(b,s)\subseteq\Dbar(a,r)$;
and it lies in $\DBerk(a,r)$ if and only if $\Dbar(b,s)\subseteq D(a,r)$.
(The one exception to the last rule is that a type~III point $\zeta(a,r)$ itself does not lie in $\DBerk(a,r)$,
even though $\Dbar(a,r)=D(a,r)$ for $r\not\in |\Cv^{\times}|$.)
As is the case for disks in $\Cv$, if two Berkovich disks intersect, then one disk contains the other.

If $\zeta$ lies in the Berkovich disk $\DBerk(a,r)$, we will sometimes abuse notation
and write $\DBerk(a,r)$ as $\DBerk(\zeta,r)$, and even $D(a,r)$ as $D(\zeta,r)$.
We will similarly write $\DbarBerk(a,r)=\DbarBerk(\zeta,r)$ and $\Dbar(a,r)=\Dbar(\zeta,r)$
if $\zeta\in\DbarBerk(a,r)$.

More generally, an \emph{open connected affinoid} is either $\PBerk$ with finitely many closed Berkovich disks
removed, or else an open Berkovich disk with finitely many closed Berkovich disks removed.
A \emph{closed connected affinoid} is defined similarly, with the roles of ``open'' and ``closed'' reversed.
The open connected affinoids form a basis for the Gel'fand topology on $\PBerk$.
If $U$ is either an open or closed connected affinoid, then both the set of type~I points of $U$
and the set of type~II points of $U$ are dense in $U$.

\subsection{Dynamics on the Berkovich line}
\label{sec:berkdyn}

Any seminorm $\zeta\in\ABerk$ extends from $\Cv[z]$ to $\Cv(z)$ by defining
$\|F/G\|_\zeta := \|F\|_{\zeta} / \|G\|_{\zeta}$, where we understand $\infty$ to be a legal
value for this expression, in case $\|G\|_{\zeta}=0$. Any rational function $f\in\Cv(z)$ then defines
a continuous function $f:\PBerk\to\PBerk$, given by
\[ \|h\|_{f(\zeta)} := \| h\circ f\|_{\zeta} , \]
which extends the usual action of $f$ on the type~I points of $\PCv$.

Moreover, if $f$ is a convergent power series on $D(a,r)$, then $f$ similarly induces a continuous
function $f:\DBerk(a,r)\to \ABerk$.
For any open disks $D(a,r), D(b,s)\subseteq\Cv$, we have
\[ f\big( D(a,r) \big) = D(b,s) \Longleftrightarrow
f\big( \DBerk(a,r) \big) = \DBerk(b,s) . \]
Furthermore, in that case, 
the following are equivalent:
\begin{itemize}
\item $f(z)-b$ has Weierstrass degree $1$ on $D(a,r)$.
\item $f:D(a,r)\to D(b,s)$ is a bijective function.
\item $f:\DBerk(a,r)\to \DBerk(b,s)$ is a bijective function.
\item $f$ has an inverse function $f^{-1}: D(b,s) \to D(a,r)$ also given by a convergent power series.
\end{itemize}
(The fact that bijectivity implies Weierstrass degree $1$ uses our assumption that $\Cv$ has characteristic zero;
that implication fails in positive characteristic for totally inseparable maps.)

The (Berkovich) \emph{Fatou set} of a rational function $f\in\Cv(z)$ of degree $d\geq 2$ is
the set of points $\zeta\in\PBerk$ having a neighborhood $U$ such that $\bigcup_{n\in\NN} f^n(U)$
omits infinitely many points of $\PBerk$. The complement $\PBerk\smallsetminus\FBerkf$
is the (Berkovich) \emph{Julia set} $\JBerkf$ of $f$.
Both sets are nonempty (see \cite[Corollaries~5.15 and~12.6]{BenBook}),
and both are invariant under $f$, meaning that
\[ f^{-1}(\JBerkf)=f(\JBerkf)=\JBerkf \quad\text{and}\quad f^{-1}(\FBerkf)=f(\FBerkf)=\FBerkf. \]
The Fatou set is open in $\PBerk$, and the Julia set is closed (and hence compact).

The type~I Fatou set $\FBerkf\cap\PCv$ consists of those points of $\PCv$ having a neighborhood
on which the sequence of iterates $\{f^n\}_{n=0}^{\infty}$ is equicontinuous with respect to the chordal metric $\rho$.
If a type~I point $x\in\Cv$ is periodic, i.e., if $f^n(x)=x$ for some
(minimal) positive integer $n\in\NN$, then the \emph{multiplier} of $x$ is $(f^n)'(x)$.
If the multiplier $\lambda$ of $x$ satisfies $|\lambda|>1$, then $x$ is said to be \emph{repelling},
and we have $x\in\JBerkf$. Otherwise, i.e. if $|\lambda|\leq 1$, then $x$ is said to be \emph{nonrepelling},
and $x\in\FBerkf$.

\section{The spherical derivative} 
\label{sec:prelimSD}

The spherical kernel is a natural extension to $\PBerk$
of the chordal metric $\rho$ on $\PCv$.
%
%
We recall its definition and some of its properties from
\cite[Section~4.3]{BR10}.

\begin{definition}
\label{def:sphker}
    The \emph{spherical kernel} is the unique function $\sk{\cdot, \cdot} : \PBerk \times \PBerk \to \RR$ such that
    \begin{itemize}
        \item $\sk{x,y} = \rrho{x,y}$ for any $x,y \in \PCv$,
        \item $\sk{\cdot, \cdot}$ is continuous on
        $\PBerk \times \PBerk \smallsetminus
        \{ (\zeta, \zeta) \mid \zeta \in \HBerk \}$,
     and
        \item for any $\zeta, \xi \in \PBerk$,
        \[ 
            \sk{\zeta, \xi} = \limsup_{(x,y)\to(\zeta,\xi)} \rrho{x,y}
        \]
        where the $\limsup$ is over $(x,y)\in\PCv\times\PCv$.
    \end{itemize}
\end{definition}
See \cite[Equation~(4.21)]{BR10} for an explicit construction of
the spherical kernel.
Although it is not a metric,
the spherical kernel has the following related properties;
see \cite[Proposition~4.7]{BR10}.

\begin{proposition}
\label{prop:BR4.7}
    The spherical kernel is symmetric and takes values in $[0,1]$. Moreover, it is
    continuous in each variable separately, and it is upper semicontinuous
    as a function of two variables.
\end{proposition}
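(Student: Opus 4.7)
The plan is to derive all four conclusions from Definition~\ref{def:sphker}, using the density of $\PCv$ in $\PBerk$ and the $\limsup$ formula as the main tools. Symmetry is immediate from $\rho(x,y)=\rho(y,x)$ on $\PCv$ together with the symmetric form of the $\limsup$; and the bound $\sk{\zeta,\xi}\in[0,1]$ follows because the explicit formulas in Section~\ref{sec:prelimPL} show $\rho(x,y)\in[0,1]$ for all $x,y\in\PCv$, so any $\limsup$ of such values lies in $[0,1]$.

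For upper semicontinuity in two variables, suppose $(\zeta_n,\xi_n)\to(\zeta,\xi)$ in $\PBerk\times\PBerk$. By the $\limsup$ characterization of $\sk{\zeta_n,\xi_n}$, we may select for each $n$ a pair $(x_n,y_n)\in\PCv\times\PCv$ in a sufficiently small neighborhood of $(\zeta_n,\xi_n)$ so that $\rho(x_n,y_n)\geq\sk{\zeta_n,\xi_n}-1/n$ and $(x_n,y_n)\to(\zeta,\xi)$. Applying the $\limsup$ definition of $\sk{\zeta,\xi}$ to this sequence of type~I pairs yields $\sk{\zeta,\xi}\geq\limsup_n\rho(x_n,y_n)\geq\limsup_n\sk{\zeta_n,\xi_n}$.

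For continuity in one variable, fix $\xi\in\PBerk$ and let $\zeta_n\to\zeta$ in $\PBerk$. If $(\zeta,\xi)$ lies off the excluded diagonal $\{(\eta,\eta):\eta\in\HBerk\}$, the joint-continuity clause of Definition~\ref{def:sphker} immediately gives $\sk{\zeta_n,\xi}\to\sk{\zeta,\xi}$. The only remaining case is $\zeta=\xi\in\HBerk$, where the upper bound $\limsup_n\sk{\zeta_n,\xi}\leq\sk{\xi,\xi}$ is a special case of the upper semicontinuity just proved.

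The main obstacle is the matching lower bound $\liminf_n\sk{\zeta_n,\xi}\geq\sk{\xi,\xi}$ at diagonal points $\xi\in\HBerk$. The first step is a pointwise existence claim: for any $\epsilon>0$, the $\limsup$ definition provides type~I points $(x^*,y^*)$ near $(\xi,\xi)$ with $\rho(x^*,y^*)>\sk{\xi,\xi}-\epsilon$, and since $(x^*,\xi)$ avoids the bad diagonal one may apply joint continuity and let $y^*$ approach $\xi$ to obtain a type~I $x^*$ arbitrarily close to $\xi$ with $\sk{x^*,\xi}>\sk{\xi,\xi}-2\epsilon$. Upgrading this existence statement to the \emph{uniform} lower bound $\sk{\zeta,\xi}>\sk{\xi,\xi}-O(\epsilon)$ for all $\zeta$ in a neighborhood of $\xi$ requires a structural fact about the spherical kernel: for $\xi=\zeta(a,r)$ of type~II or~III, the function $\zeta\mapsto\sk{\zeta,\xi}$ is constant, equal to $\sk{\xi,\xi}$, throughout the Berkovich disk $\ddbbar{a,r}$. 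This is most transparent from the explicit construction in \cite[Equation~(4.21)]{BR10} and extends to type~IV $\xi$ by an approximation argument along a defining nested chain of disks. Carrying out this structural step — rather than the formal limiting arguments above — is the technical heart of the proof.
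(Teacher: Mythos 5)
The paper offers no proof of its own here --- it cites \cite[Proposition~4.7]{BR10} directly. The argument there goes through the explicit formula
\[
\sk{\zeta,\xi} \;=\; \frac{\diam(\zeta\vee\xi)}{\max\{1,|\zeta|\}\,\max\{1,|\xi|\}}\,,
\qquad \zeta,\xi\in\ABerk,
\]
where $\zeta\vee\xi$ denotes the smallest point of $\ABerk$ dominating both (for $\zeta,\xi$ of type~II or~III, the smallest closed disk containing both), and the numerator $\diam(\zeta\vee\xi)$ is the Hsia kernel relative to $\infty$. Your treatment of symmetry and of the range $[0,1]$ is fine, and your upper-semicontinuity conclusion is correct, but it is cleaner (and safer) to argue with open neighborhoods directly, i.e.\ from $\sk{\zeta,\xi}=\inf_U\sup_{(x,y)\in U\cap\PCv\times\PCv}\rho(x,y)$: sequential upper semicontinuity need not imply upper semicontinuity when $\PBerk$ is not first-countable.

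The genuine gap is exactly where you flag ``the technical heart.'' The structural claim --- that for $\xi=\zeta(a,r)$ of type~II or~III the map $\zeta\mapsto\sk{\zeta,\xi}$ is constantly $\sk{\xi,\xi}$ on $\ddbbar{a,r}$ --- is \emph{false}. Take $\xi=\zeta(0,R)$ with $R>1$, $R\in|\Cv^\times|$, and $\zeta=\zeta(0,1)$; then $\zeta\in\ddbbar{0,R}$, and the formula above gives $\sk{\xi,\xi}=\sphdiam(\xi)=R/R^2=1/R$ while $\sk{\zeta,\xi}=R/(1\cdot R)=1\neq 1/R$. What \emph{is} constant on $\ddbbar{a,r}$ is the numerator $\diam(\zeta\vee\xi)=r$; your claim silently drops the denominator $\max\{1,|\zeta|\}\max\{1,|\xi|\}$, which is not constant on that disk. (Your claim does hold when $\Dbar(a,r)\subseteq\Dbar(0,1)$, since the denominator is then identically $1$, which may be the source of the confusion.) There is a secondary problem as well: $\ddbbar{a,r}$ is not a neighborhood of $\zeta(a,r)$ in $\PBerk$ --- every open set containing $\zeta(a,r)$ meets its complement --- so even a correct constancy statement on the closed disk would not by itself yield your asserted uniform lower bound on a neighborhood of $\xi$. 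The repair is to establish separate continuity of numerator and denominator and take the quotient: continuity of $\zeta\mapsto|\zeta|=\|z\|_\zeta$ is immediate from the Gel'fand topology, and separate continuity of $\zeta\mapsto\diam(\zeta\vee\xi)$ is the real content, which is essentially what \cite{BR10} prove for the Hsia kernel before deducing the present proposition.
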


The spherical kernel is discontinuous on the diagonal in $\HBerk\times\HBerk$,
but it is precisely on this diagonal that we are most interested in it,
as illustrated by the next two definitions.

\begin{definition}
    The \emph{spherical diameter} $\spd{\cdot}: \PBerk \to [0,1]$ is defined by 
    \[
        \spd{\zeta} := \sk{\zeta, \zeta}
    \]
    for any $\zeta \in \PBerk$.
\end{definition}

In \cite[Section~6.1.2]{BenBook}, the diameter of $\zeta\in\ABerk$ is defined as 
\[ 
    \diam(\zeta)
    :=\inf\{\|z-a\|_{\zeta} : a\in\Cv\}
\]
Defining $|\zeta|:=\|z\|_{\zeta}$, we have the identity
\[ 
    \sphdiam(\zeta) 
    =
    \frac{\diam(\zeta)}{\max\{1,|\zeta|^2\} }
    \quad\text{for all } \zeta\in\ABerk,
\]
with $\sphdiam(\infty) = 0$.



\begin{definition}
    Let $f\in\Cv(z)$ be a rational function.
    The \emph{spherical derivative} of $f$ on $\PBerk$ is
    \[ 
        \sd{f}{\zeta} 
        := \lim_{\zeta' \rightarrow \zeta} \frac{\big\|f(\zeta), f(\zeta') \big\|}{\|\zeta, \zeta' \| }
    \] 
    where the convergence $\zeta' \to \zeta$ is with respect to
    the Gel'fand topology on $\PBerk$.
\end{definition}

Our next result shows how to compute the spherical derivative in practice.

\begin{proposition}
\label{prop:sphcomp}
    Let $f\in\Cv(z)$ be a nonconstant rational function, and let $\zeta\in\PBerk$.
    \begin{enumerate}
        \item If $\zeta=x\in\PCv$, then $f^{\nat}(x)=f^{\#}(x)$ is the
        classical spherical derivative on $\PCv$, given by
        \[ f^{\nat}(x)=f^{\#}(x):= \lim_{y\to x} \frac{\rho\big(f(x),f(y)\big)}{\rho(x,y)}
        \in\RR_{\geq 0},\]
        where $y\to x$ in $\PCv$. In particular, if $x,f(x)\in\Cv$, then
        \[ f^{\nat}(x)=|f'(x)| \cdot \frac{\max\{1,|x|^2\}}{\max\{1,|f(x)|^2\}} .\]
        \item If $\zeta\in\HBerk$, then
    
    \[
        \sd{f}{\zeta} 
        = \frac{\sphdiam\big(f(\zeta)\big)}{\sphdiam(\zeta)} \in \RR_{>0}.
    \]
    \end{enumerate}
    \end{proposition}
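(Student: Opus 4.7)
The two cases have quite different flavors, so I plan to handle them separately.

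For part~(b), the approach is short and conceptual. Fix $\zeta \in \HBerk$ and let $\zeta' \to \zeta$ in the Gel'fand topology. Since $f:\PBerk \to \PBerk$ is continuous, $f(\zeta') \to f(\zeta)$. The separate continuity of the spherical kernel in each of its arguments (Proposition~\ref{prop:BR4.7}) then gives
\[ \lim_{\zeta' \to \zeta} \|\zeta, \zeta'\| = \|\zeta, \zeta\| = \sphdiam(\zeta) > 0, \]
and analogously $\lim_{\zeta' \to \zeta} \|f(\zeta), f(\zeta')\| = \sphdiam(f(\zeta))$. Because $f$ is nonconstant and $\mathrm{char}(\Cv) = 0$, types of Berkovich points are preserved by $f$, so $f(\zeta) \in \HBerk$ and hence $\sphdiam(f(\zeta)) > 0$ as well. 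Dividing the two limits yields $f^{\nat}(\zeta) = \sphdiam(f(\zeta))/\sphdiam(\zeta) > 0$, as claimed.

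For part~(a), where $\zeta = x \in \PCv$, both $\sphdiam(x) = 0$ and $\sphdiam(f(x)) = 0$, so the quotient formula from~(b) is unavailable and we must work directly with the defining limit $\lim_{\zeta' \to x} \|f(\zeta'), f(x)\|/\|\zeta', x\|$. The first step is to establish the classical formula by restricting the limit to $\zeta' = y \in \PCv$, in which case $\|\cdot, \cdot\|$ agrees with $\rho$ and
\[ f^{\#}(x) = \lim_{y \to x} \frac{\rho(f(x), f(y))}{\rho(x, y)} \]
can be computed directly in affine coordinates, producing $|f'(x)| \cdot \max\{1, |x|^2\}/\max\{1, |f(x)|^2\}$ whenever $x, f(x) \in \Cv$, via the standard identity $\lim_{y\to x}|f(y)-f(x)|/|y-x| = |f'(x)|$. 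The cases where $x$ or $f(x)$ equals $\infty$ reduce to the affine case by pre- and post-composition with elements of $\PGL(2,\calO)$, which are chordal isometries and thus do not affect $f^{\nat}$. The second step is to verify that the limit along \emph{general} Berkovich approaches $\zeta' \to x$ agrees with this classical value. Normalizing so that $x, f(x) \in \calO$, the Weierstrass expansion $f(z) - f(x) = c_n(z-x)^n + \cdots$ with $c_n \neq 0$ gives, for the type~II point $\zeta(x,r)$ with small $r$, the identities $\|\zeta(x,r), x\| = r$ and $\|f(\zeta(x,r)), f(x)\| = |c_n| r^n$, so the ratio $|c_n| r^{n-1}$ converges to $|c_1| = |f'(x)|$ as $r \to 0$ (or to $0$ when $f'(x) = 0$). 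A parallel computation for nearby points $\zeta(a,r)$ with $a \neq x$, using $|f(a)-f(x)| \sim |f'(x)| |a-x|$ for small $|a-x|$, shows that the limit is the same from every direction in the Berkovich tree, matching the classical value $f^{\#}(x)$.

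The main obstacle, I expect, lies in part~(a): confirming that every sequence $\zeta' \to x$ in $\PBerk$ yields the same limit, since a neighborhood of a type~I point in $\PBerk$ contains points of all four types approaching $x$ along many directions. Careful accounting for the Berkovich tree structure near $x$, combined with the local Weierstrass description of $f$, should handle this, but one must deal separately with the degenerate cases $f'(x) = 0$ (where the limit is $0$) and $f$ having a pole at $x$ (handled by a $\PGL(2,\calO)$-coordinate change).
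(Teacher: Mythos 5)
Part (b) of your proposal matches the paper's argument: separate continuity of the spherical kernel and continuity of $f$ handle the limits of numerator and denominator, and nonconstancy of $f$ forces $f(\zeta)\in\HBerk$ so both are positive. (Type preservation does not actually require $\charact(\Cv)=0$; that hypothesis is not needed in this step.)

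For part (a), you take a genuinely different and substantially longer route than the paper, and it is this route where the gap lies. The paper's argument is a single step: by Proposition~\ref{prop:BR4.7}, the functions $\zeta'\mapsto\|\zeta,\zeta'\|$ and $\zeta'\mapsto\|f(\zeta),f(\zeta')\|$ are continuous on $\PBerk$, so their ratio $\phi$ is continuous on $\PBerk\smallsetminus\{x\}$; combined with the density of $\PCv\smallsetminus\{x\}$ in $\PBerk$, the limit defining $f^{\nat}(x)$ is determined by its restriction to type I points, and hence equals $f^{\#}(x)$ with no further work. You instead try to reproduce this by hand: compute the ratio along the central path $\zeta(x,r)$ using the Weierstrass expansion, then appeal to a ``parallel computation'' for $\zeta(a,r)$ with $a\neq x$, then assert all directions of the tree give the same limit. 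That parallel computation is never actually carried out, the asymptotic $|f(a)-f(x)|\sim|f'(x)||a-x|$ that it invokes is false precisely in the degenerate case $f'(x)=0$ that you defer, and type IV points approaching $x$ are not addressed at all. You flag exactly this in your last paragraph as ``the main obstacle,'' but the density-plus-continuity observation from Proposition~\ref{prop:BR4.7} is what dispatches the obstacle in one line, and without it your sketch does not close the argument. The $\PGL(2,\calO)$ reduction to the affine case, the computation of the chordal formula, and the $\zeta(x,r)$ Weierstrass computation are all correct but, given the available tool, unnecessary.
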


\begin{proof}
    By Proposition~\ref{prop:BR4.7}, the maps
    $\sk{\zeta, \cdot} : \PBerk \to \RR$ and $\sk{\F{f}{\zeta}, \cdot} : \PBerk \to \RR$
    are continuous. Since $f : \PBerk \to \PBerk$ is also continuous, we have
    \[
        \lim_{\zeta' \to \zeta} \big\|\F{f}{\zeta}, f(\zeta')\big\|
        = \big\|\F{f}{\zeta}, \F{f}{\zeta}\big\|
        = \sphdiam\big( f(\zeta) \big),
    \]
    and
    \[
        \lim_{\zeta' \to \zeta} \sk{\zeta, \zeta'} 
        = \sk{\zeta, \zeta} 
        = \spd{\zeta}.
    \]

    If $\zeta \in \HBerk$, then $\spd{\zeta}>0$.
    Because $f$ is nonconstant, we have $\F{f}{\zeta} \in \HBerk$ as well,
    and hence also $\spd{\F{f}{\zeta}} > 0$. Therefore,
    \[
        \sd{f}{\zeta} 
        = \frac{\sphdiam\big(f(\zeta)\big)}{\spd{\zeta}} > 0.
    \]

    Otherwise, we have $\zeta=x\in\PCv$,
    so that $\F{f}{\zeta}=f(x) \in \PCv$, and hence
    \[ 
    \sd{f}{\zeta} 
            = \lim_{\zeta' \rightarrow \zeta} \frac{\big\|f(\zeta), f(\zeta') \big\|}{\|\zeta, \zeta'\|}
            = \lim_{y\to x}
            \frac{\big\|f(x), f(y) \big\|}{\|x,y\|} \\
            = \lim_{y\to x} \frac{\rho\big(f(x), f(y)\big)}{ \rrho{x,y}}
            = f^{\#}(x) \geq 0,
    \]
    where the second and third limits are for $y\to x=\zeta$ in $\PCv$,
    and the second equality follows from the density of $\PCv$ in $\PBerk$.
\end{proof}

Recall that the chordal metric $\rho$
is invariant under the action of $\PGL(2, \calO)$.
Therefore, by the third bullet point of Definition~\ref{def:sphker},
we have
\begin{equation}
\label{eq:natinvar}
h^{\nat}(\zeta)=1 \quad \text{for all } h \in \PGL(2, \calO)
\text{ and } \zeta\in\PBerk .
\end{equation}

The spherical derivative also satisfies the following chain rule.

\begin{proposition}\label{prop:chainrule}
    For any rational functions $f, g\in\Cv(z)$ and any $\zeta\in\PBerk$, we have
    \[
        \sd{(f \circ g)}{\zeta} 
        = f^{\nat}\big(g(\zeta)\big) \cdot g^{\nat}(\zeta)
    \]
\end{proposition}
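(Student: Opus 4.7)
My plan is to split on whether $\zeta$ is of type~I or not, invoking the two parts of Proposition~\ref{prop:sphcomp}. Without loss of generality I assume $f$ and $g$ are nonconstant, since otherwise the constant factor forces both sides of the claimed identity to vanish.

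For $\zeta\in\HBerk$, I would first note that, because $f$ and $g$ are nonconstant, both $g(\zeta)$ and $f(g(\zeta))$ also lie in $\HBerk$. Proposition~\ref{prop:sphcomp}(b) then expresses all three spherical derivatives as ratios of spherical diameters, and the chain rule reduces to a one-line telescoping:
\[ g^{\nat}(\zeta)\cdot f^{\nat}\bigl(g(\zeta)\bigr) = \frac{\sphdiam(g(\zeta))}{\sphdiam(\zeta)}\cdot \frac{\sphdiam(f(g(\zeta)))}{\sphdiam(g(\zeta))} = \frac{\sphdiam((f\circ g)(\zeta))}{\sphdiam(\zeta)} = (f\circ g)^{\nat}(\zeta). \]

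For $\zeta=x\in\PCv$, rational functions send type~I points to type~I points, so $g(x),f(g(x))\in\PCv$, and Proposition~\ref{prop:sphcomp}(a) identifies all three spherical derivatives with their classical $\#$-counterparts. The task then reduces to the classical chain rule $(f\circ g)^{\#}(x)=f^{\#}(g(x))\cdot g^{\#}(x)$, which I would verify as follows. Since $g$ is a nonconstant rational map, the set $g^{-1}(g(x))$ is finite, so $g(y)\neq g(x)$ for every $y$ in some punctured neighborhood $V\smallsetminus\{x\}$ of $x$. On that neighborhood I would factor the difference quotient as
\[ \frac{\rho\bigl(f(g(x)),f(g(y))\bigr)}{\rho(x,y)} = \frac{\rho\bigl(f(g(x)),f(g(y))\bigr)}{\rho\bigl(g(x),g(y)\bigr)}\cdot \frac{\rho\bigl(g(x),g(y)\bigr)}{\rho(x,y)}, \]
and then let $y\to x$ in $\PCv$; continuity of $g$ ensures $g(y)\to g(x)$, so the two factors tend to $f^{\#}(g(x))$ and $g^{\#}(x)$ respectively.

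The main nuisance, which I would address last, is the bookkeeping when any of $x$, $g(x)$, or $f(g(x))$ happens to equal $\infty$, since the explicit formula in Proposition~\ref{prop:sphcomp}(a) was stated for finite points. The plan is to pre- or post-compose by the inversion $z\mapsto 1/z$, which lies in $\PGL(2,\calO)$ and hence is an isometry for $\rho$; by equation~\eqref{eq:natinvar} this leaves all three spherical derivatives unchanged, so we may assume all three points are finite. I do not anticipate any genuinely difficult step beyond this case analysis.
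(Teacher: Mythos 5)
Your proof is correct, but it takes a genuinely different route from the paper. The paper never splits into cases: it argues directly from the definition of the spherical derivative, factoring the difference quotient
\[
\frac{\|f(g(\zeta)),f(g(\zeta'))\|}{\|\zeta,\zeta'\|}
=\frac{\|f(g(\zeta)),f(g(\zeta'))\|}{\|g(\zeta),g(\zeta')\|}\cdot\frac{\|g(\zeta),g(\zeta')\|}{\|\zeta,\zeta'\|}
\]
and passing to the limit as $\zeta'\to\zeta$, invoking continuity of $g$. This is a one-line argument valid uniformly on $\PBerk$. Your proposal instead routes through Proposition~\ref{prop:sphcomp}, splitting on $\zeta\in\HBerk$ versus $\zeta\in\PCv$. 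The $\HBerk$ branch is where you get a real payoff: Proposition~\ref{prop:sphcomp}(b) turns each spherical derivative into an exact ratio of diameters, so no limit is needed at all and the identity is a literal telescoping cancellation. Your type~I branch, by contrast, essentially reproduces the paper's argument restricted to $\PCv$, factoring the $\rho$-difference quotient and shrinking to a punctured neighborhood where $g(y)\neq g(x)$. (Note that the paper's uniform argument silently needs the same dodge: $\|g(\zeta),g(\zeta')\|$ can vanish for $\zeta'\neq\zeta$ when $g$ is not injective near $\zeta$, and one restricts to the complement of the finite fibre $g^{-1}(g(\zeta))$.) One small economy in your plan: the bookkeeping at $\infty$ you defer to the end is actually already handled by your difference-quotient argument, since the first formula of Proposition~\ref{prop:sphcomp}(a) in terms of $\rho$ makes sense at all points of $\PCv$ including $\infty$; only the second formula involving $|f'(x)|$ is restricted to finite points, and you never use it. Your reduction to the nonconstant case is also sound.
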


\begin{proof}
    By continuity, we have
    \begin{align*}
        \sd{(f \circ g)}{\zeta}
        &= \lim_{\zeta' \to \zeta} \frac{ \big\| f\big(g(\zeta)\big), f\big(g(\zeta')\big)\big\|}{\|\zeta, \zeta'\|}
        = \lim_{\zeta' \to \zeta} \frac{ \big\| f\big(g(\zeta)\big), f\big(g(\zeta')\big)\big\|}{\big\|g(\zeta), g(\zeta') \big\|}
        \cdot \lim_{\zeta' \to \zeta} \frac{\big\|g(\zeta), g(\zeta') \big\|}{\|\zeta, \zeta'\|}
        \\
        &= f^{\nat}\big(g(\zeta)\big) \cdot \sd{g}{\zeta}
        \qedhere
    \end{align*}
\end{proof}

We close this section with the following lemma concerning a disk
on which the rational function $f$ has Weierstrass degree $1$.

\begin{lemma}\label{lem:injdiam}
    Let $f \in \Cv(z)$, let $a, b \in \Cv$ with $|a|,|b| \leq 1$, and $0 < r, s \le 1$.
    Suppose $f$ maps $\dd{a, r}$ bijectively onto $\dd{b, s}$. Then $\sd{f}{\zeta} = s/r$ for any $\zeta \in \ddb{a, r}$.
    In particular, we have
    \[
        \sphdiam\big(f(\zeta)\big)
        = \frac{s}{r} \spd{\zeta}
    \]
\end{lemma}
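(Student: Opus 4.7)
My plan is to unpack the Weierstrass-degree-$1$ hypothesis into quantitative control of $f$ throughout $D(a,r)$, then read off $f^{\nat}(\zeta)$ separately on each type of point in $\DBerk(a,r)$ via Proposition~\ref{prop:sphcomp}. Writing $f$ on $D(a,r)$ as a power series $F(z) = b + c_1(z-a) + c_2(z-a)^2 + \cdots$, the bijectivity hypothesis, combined with Section~\ref{sec:prelimw}, gives Weierstrass degree $1$, hence $|c_1|r = s$ and $|c_i|r^i \le s$ for $i \ge 2$, and consequently $|f(x) - f(y)| = (s/r)|x - y|$ for all $x, y \in D(a,r)$. In particular, every closed sub-disk $\Dbar(a',r') \subseteq D(a,r)$ is sent bijectively onto $\Dbar(f(a'),(s/r)r')$.

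A useful uniform observation: since $|a|, |b| \le 1$ and $r, s \le 1$, every $\zeta \in \DBerk(a,r)$ satisfies $|\zeta| := \|z\|_{\zeta} \le 1$, and every $f(\zeta) \in \DBerk(b,s)$ satisfies $|f(\zeta)| \le 1$. The identity relating $\sphdiam$ and $\diam$ from Section~\ref{sec:prelimSD} then gives $\sphdiam(\zeta) = \diam(\zeta)$ and $\sphdiam(f(\zeta)) = \diam(f(\zeta))$, so for types II, III, IV it suffices to establish $\diam(f(\zeta)) = (s/r)\diam(\zeta)$; Proposition~\ref{prop:sphcomp}(b) then yields both $f^{\nat}(\zeta) = s/r$ and the ``in particular'' clause.

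For type~II/III $\zeta = \zeta(a',r')$ with $\Dbar(a',r') \subseteq D(a,r)$, the image formula above directly gives $\diam(f(\zeta)) = (s/r)r' = (s/r)\diam(\zeta)$. For type~IV $\zeta$ realized by a nested chain $\Dbar(a_n,r_n) \subseteq D(a,r)$ with empty intersection, the images form a nested chain $\Dbar(f(a_n),(s/r)r_n)$; their intersection is empty because a common point would have an $f$-preimage in every $\Dbar(a_n,r_n)$, contradicting injectivity of $f$ on $D(a,r)$. Hence $f(\zeta)$ is a type~IV point with $\diam(f(\zeta)) = \inf_n (s/r)r_n = (s/r)\diam(\zeta)$. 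Finally, for type~I $x \in D(a,r)$, both $x$ and $f(x)$ lie in $\calO$, so Proposition~\ref{prop:sphcomp}(a) reduces the claim to $|F'(x)| = s/r$; differentiating the power series termwise, the $i = 1$ summand has absolute value $|c_1| = s/r$, while for $i \ge 2$ the $i$-th term satisfies $|ic_i(x-a)^{i-1}| \le |c_i|r^{i-1}(|x-a|/r)^{i-1} < |c_1|$ thanks to $|x-a| < r$ and Weierstrass degree~$1$, so the ultrametric inequality gives $|F'(x)| = s/r$.

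The only step that rises above bookkeeping is the ultrametric comparison for type~I, and it is routine; the whole lemma may be viewed as the assertion that, under the stated bijectivity and size constraints, $f$ dilates all ``diameters'' uniformly by the factor $s/r$.
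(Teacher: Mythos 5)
Your proof is correct, and it takes a somewhat different route from the paper's. The paper establishes $|f(x)-f(y)|=(s/r)|x-y|$ on type~I points of the disk (via \cite[Proposition~3.20]{BenBook}), converts this to $\|f(x),f(y)\|=(s/r)\|x,y\|$ using that both disks sit in $\calO$, and then invokes the $\limsup$ characterization of the spherical kernel (third bullet of Definition~\ref{def:sphker}) to conclude $\sphdiam(f(\zeta))=(s/r)\sphdiam(\zeta)$ for all $\zeta\in\DBerk(a,r)$ in one stroke; the non-type-I case of $f^{\nat}(\zeta)=s/r$ then falls out of Proposition~\ref{prop:sphcomp}(b), and the type~I case is handled separately just as you do. You instead dispense with the limsup step entirely and track the ordinary diameter $\diam$ case by case over point types II/III, IV, and I, using the identity $\sphdiam=\diam$ on $\calO$-disks. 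This is more hands-on but fully rigorous (your empty-intersection argument for type~IV is the one nonobvious check, and it is right: a common image point would pull back under the injective $f$ to a point in every disk of the chain). The paper's approach is cleaner because the limsup definition subsumes the type~II/III/IV casework automatically, whereas yours is more explicit about the geometry of which disks map where, which some readers may find more illuminating. Your quantitative estimate $|ic_i(x-a)^{i-1}|<|c_1|$ for $i\geq2$ in the type~I case is correct and gives $|F'(x)|=|c_1|=s/r$ directly; the paper reaches the same conclusion slightly more implicitly via equation~\eqref{eq:xysize} and Proposition~\ref{prop:sphcomp}(a).
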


\begin{proof}
By \cite[Proposition~3.20]{BenBook}, we have
\begin{equation}
\label{eq:xysize}
\big|f(x)-f(y)\big| = \frac{s}{r} |x-y| \quad\text{for all } x,y\in D(a,r) .
\end{equation}
Recall that $\rrho{x, y} = |x - y|$ for any $x,y \in \calO$.
Because $D(a,r),D(b,s)\subseteq\calO$, it follows that
\[ \big\| f(x), f(y) \big\| = \frac{s}{r} \|x,y\| \quad\text{for all } x,y\in D(a,r) . \]
Therefore, by the third bullet point of Definition~\ref{def:sphker}, we have
\[ \big\| f(\zeta), f(\zeta) \big\| = \frac{s}{r} \| \zeta, \zeta \|
\quad\text{for all } \zeta\in\DBerk(a,r), \]
which is the desired conclusion for $\zeta$ not of type~I,
by definition of the spherical diameter.
Finally, the conclusion for $\zeta$ of type~I is immediate
from equation~\eqref{eq:xysize} and Proposition~\ref{prop:sphcomp}(a).
\end{proof}

%
%

\section{Basic Lemmas}
\label{sec:somelemmas}
\begin{lemma}\label{lem:SL0new}
Let $f\in\Cv(z)$. Suppose there exists $\delta >0$ such that
\[ \big(f^n\big)^{\nat}(\zeta) \geq \delta \quad \text{for all } \zeta\in\JBerkf
\text{ and } n\in\NN.  \]
Then there exist $\delta' >0$ and $h \in \PGL(2, \Cv)$ such that the map $g := h \circ f \circ h^{-1}$ satisfies:
\begin{itemize}
\item $|g(\zeta)|> 1$ for all $\zeta\in\PBerk$ with $|\zeta|> 1$,
\item $\dsps \JBerkg \subseteq\DbarBerk(0,1)$, and
\item $\dsps \big( g^n\big)^{\nat}(\zeta) \geq \delta'$
for all $\dsps \zeta\in\JBerkg$ and $n\in\NN$.
\end{itemize}
\end{lemma}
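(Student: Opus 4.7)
The plan is to choose $h \in \PGL(2, \Cv)$ that first moves a nonrepelling fixed point of $f$ to $\infty$ and then rescales, so that the Berkovich open disk $\{\zeta \in \PBerk : |\zeta| > 1\}\cup\{\infty\}$ becomes a forward-invariant neighborhood of $\infty$ inside the Berkovich Fatou set of the conjugate $g := h \circ f \circ h^{-1}$. The third bullet will then follow from the chain rule (Proposition~\ref{prop:chainrule}) applied to $g^n = h \circ f^n \circ h^{-1}$, together with the fact that the spherical derivative of a M\"obius transformation is a continuous positive function on the compact space $\PBerk$, hence bounded below by a positive constant.

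For the existence of a nonrepelling fixed point, I use the standard fact that every $f \in \Cv(z)$ of degree $\geq 2$ has some $p \in \PCv$ with $f(p)=p$ and $|\lambda_p| \leq 1$: the holomorphic fixed-point identity $\sum_p 1/(1-\lambda_p) = 1$ combined with the ultrametric inequality rules out the case that all simple fixed points are repelling, and any multiple fixed point has multiplier $1$. Choose $h_1 \in \PGL(2, \Cv)$ with $h_1(p)=\infty$ and set $g_1 := h_1 \circ f \circ h_1^{-1}$, so that $g_1$ fixes $\infty$ nonrepellingly. In the local coordinate $w = 1/z$ one has a power-series expansion $\tilde g_1(w) = \mu w + c_2 w^2 + \cdots$ with $|\mu| \leq 1$; non-archimedean arithmetic then gives $|\tilde g_1(w)| \leq |w|$ on $\Dbar(0,r)$ for any sufficiently small $r$, and translating back yields some $R_0 > 0$ with $|g_1(z)| \geq |z|$ for every classical $z$ with $|z| > R_0$.

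Since $\infty$ lies in the open set $\calF_{\Berk, g_1}$, one can enlarge $R_0$ to some $R_1$ so that the entire Berkovich open disk $\{\zeta : |\zeta| > R_1\}$ is contained in $\calF_{\Berk, g_1}$; together with standard facts about the image of a Berkovich disk under a rational map (in the spirit of Section~\ref{sec:berksp} and Lemma~\ref{lem:injdiam}), the classical bound extends to $|g_1(\zeta)| > R_1$ for every $\zeta \in \PBerk$ with $|\zeta| > R_1$. Setting $h_2(z) = z/R_1$ and $h := h_2 \circ h_1$ gives the first two bullets. The chain rule then yields
\[
(g^n)^{\nat}(\zeta) = h^{\nat}\!\big(f^n(h^{-1}(\zeta))\big) \cdot (f^n)^{\nat}\!\big(h^{-1}(\zeta)\big) \cdot (h^{-1})^{\nat}(\zeta) \geq M^2 \delta
\]
for every $\zeta \in \calJ_{\Berk,g}$, where $M > 0$ is a common lower bound for $h^{\nat}$ and $(h^{-1})^{\nat}$ on $\PBerk$; thus $\delta' := M^2 \delta$ works for the third bullet.

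The main obstacle is the Berkovich extension of the forward-invariance bound: at a type~II or~III point $\zeta = \zeta(a,s)$, the value $\|g_1\|_\zeta$ is the quotient $\|F\|_\zeta/\|G\|_\zeta$ of polynomial sup-norms (where $g_1 = F/G$), not literally $\sup |g_1|$ on $\Dbar(a,s)$, so one must argue separately on ``thin'' disks with $|a|>s>R_1$ (where $\|g_1\|_\zeta$ really is $\sup|g_1|$ and is bounded below by $|a|$ using the classical estimate) and on large disks $\DbarBerk(0,s)$ with $s>R_1$ (where a leading-term analysis using the rational form of $g_1$ gives $\|g_1\|_\zeta \geq s$). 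The indifferent case $|\mu|=1$ is the most delicate since the classical estimate degenerates to an equality $|g_1(z)|=|z|$, but combined with $|\zeta|>R_1$ this still yields the desired strict inequality $|g_1(\zeta)| \geq |\zeta| > R_1$.
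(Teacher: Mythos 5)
Your approach is essentially the same as the paper's: find a nonrepelling classical fixed point $p$, move it to $\infty$ by a M\"obius map, rescale so that $\{|\zeta|>1\}$ becomes a forward-invariant Fatou neighborhood of $\infty$, and then use the chain rule to translate the lower bound on $(f^n)^{\nat}$ into one on $(g^n)^{\nat}$. The one spot where your write-up is weaker than the paper's proof is the justification that $h^{\nat}$ and $(h^{-1})^{\nat}$ are bounded below on $\PBerk$. You appeal to ``continuity of the spherical derivative on the compact space $\PBerk$,'' but this is not an innocuous claim: $\sphdiam$ is only upper semicontinuous (Proposition~\ref{prop:BR4.7}), and it vanishes on the dense set of type~I points, so continuity of $h^{\nat}=\sphdiam\circ h/\sphdiam$ on all of $\PBerk$ is far from obvious. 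The paper sidesteps this entirely by choosing $h_1 \in \PGL(2,\calO)$ (which is always possible, since $\PGL(2,\calO)$ acts transitively on $\PCv$), so that $h_1^{\nat}\equiv 1$ by equation~\eqref{eq:natinvar}, and the only nontrivial contribution is from the explicit scaling $h_2(z)=z/b$, for which $h_2^{\nat},(h_2^{-1})^{\nat}\geq\min\{|b|,|b|^{-1}\}$ by direct computation. If you keep $h_1$ in the full $\PGL(2,\Cv)$, you should instead justify the lower bound via a Cartan-type decomposition $h = k_1 \circ t \circ k_2$ with $k_i\in\PGL(2,\calO)$ and $t$ a scaling, rather than a compactness argument whose hypothesis you haven't verified. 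Your concern at the end about extending the classical estimate $|g_1(x)|\geq|x|$ to Berkovich points is unnecessary: since $\zeta\mapsto\|F\|_\zeta$ and $\zeta\mapsto\|G\|_\zeta$ are continuous by definition of the Gel'fand topology and the type~I points are dense, the inequality $\|g_1\|_\zeta\geq\|z\|_\zeta$ on $\{|\zeta|>1\}$ follows by taking limits; no case analysis on ``thin'' versus ``large'' disks is needed.
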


\begin{proof}
By \cite[Proposition~4.2]{BenBook}, there is a type~I point $\alpha\in\PCv$ that is a nonrepelling fixed point of $f$.
Let $h_1\in\PGL(2, \calO)$ be a M\"obius transformation satisfying $h_1(\alpha) = \infty$.
By \cite[Proposition~4.3(c)]{BenBook}, there is some $R>0$ so that the map $g_1:=h_1\circ f\circ h_1^{-1}$
satisfies $|g_1(x)|>R$ for all $x\in\PCv$ with $|x|>R$; and by \cite[Theorem~4.18]{BenBook}, we have $|g_1(x)|\geq |x|$
for all such $x$.

Choose $b\in\Cv^{\times}$ with $|b|\geq R$, and define $h_2\in\PGL(2,\Cv)$ by $h_2(z):= z/b$.
Define $h:=h_2\circ h_1$ and $g := h \circ f \circ h^{-1}$.
Then $|g(x)|\geq |x|$ for all $x\in\Cv$ with $|x|>1$, implying the first conclusion by continuity.
Moreover, the Fatou set $\FBerkg=h(\FBerkf)$ of $g$ must contain $\PBerk\smallsetminus\DbarBerk(0,1)$,
and hence the Julia set $\JBerkg=h(\JBerkf)$ is contained in $\DbarBerk(0,1)$, yielding the second conclusion.

It is easy to check that
\[ h_2^{\nat}(\zeta), \big( h_2^{-1} \big)^{\nat}(\zeta) \geq \min\big\{ |b|, |b|^{-1} \big\}
\quad \text{for all }\zeta\in\PBerk. \]
Therefore, by equation~\eqref{eq:natinvar} and Proposition~\ref{prop:chainrule},
we also have
\[ h^{\nat}(\zeta), \big( h^{-1} \big)^{\nat}(\zeta) \geq \min\big\{ |b|, |b|^{-1} \big\}
\quad \text{for all }\zeta\in\PBerk. \]
Define $\delta':=\delta\min\{|b|^2,|b|^{-2}\} >0$.
For any $\zeta\in\JBerkg$, again by Proposition~\ref{prop:chainrule}, we have
\begin{align*}
\big( g^n \big)^{\nat}(\zeta) &= h^{\nat}\Big( f^n\big(h^{-1}(\zeta)\big) \Big)
\cdot \big(f^n\big)^{\nat}\big( h^{-1}(\zeta) \big) \cdot \big(h^{-1}\big)^{\nat}(\zeta)
\\
& \geq \min\big\{ |b|, |b|^{-1} \big\} \cdot \big( f^n\big)^{\nat}\big( h^{-1}(\zeta) \big)
\cdot \min\big\{ |b|, |b|^{-1} \big\}
\geq \delta'. \qedhere
\end{align*}
\end{proof}

Define the real number
\[ \kappa := \begin{cases}
|p|^{1/(p-1)} & \text{ if } p > 0, \\
1  &\text{ if } p = 0, 
\end{cases} \]
where $p$ is the residue characteristic of $\Cv$.
Note that $0 <\kappa\leq 1$, since $\Cv$ itself has characteristic zero.

If a convergent power series $F \in \Cv[[z - a]]$ on a disk $D(a, r)$ has no critical points,
it is still possible that $F$ may not be injective on $D(a, r)$.
However, the next result shows that $F$ is injective on the smaller disk $D(a, \kappa r)$, 
scaling distances by a factor of $|F'(a)|$.

\begin{lemma}
\label{lem:kappa}
Fix $a\in\Cv$ and $r>0$. Let
\[ F(z) = \sum_{i=0}^{\infty} c_i (z-a)^i \in \Cv[[z-a]] \]
be a power series converging on $D(a,r)$.
If $F$ has no critical points in $D(a,r)$, then $F$
maps $D(a,\kappa r)$ bijectively onto $D(c_0, |c_1| \kappa r)$.
\end{lemma}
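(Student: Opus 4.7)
The plan is to reduce the problem to showing that $F - c_0$ has Weierstrass degree exactly $1$ on the smaller disk $D(a, \kappa r)$; the conclusion then follows directly from the bijectivity/image characterization recalled in Section~\ref{sec:prelimw}, using that $c_1 = F'(a) \neq 0$ under the no-critical-points hypothesis.

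The first step is to extract a coefficient bound on $F$ from the hypothesis. For each $0 < s < r$, the derivative $F'(z) = \sum_{i \geq 1} i c_i (z-a)^{i-1}$ has no zeros in the closed disk $\Dbar(a,s)$. By Weierstrass preparation on closed non-archimedean disks, this forces strict dominance of the constant term of $F'$: namely $|c_1| > |i c_i|s^{i-1}$ for every $i \geq 2$. Letting $s \to r^-$ gives
\[ |i c_i|\, r^{i-1} \leq |c_1|, \quad\text{equivalently}\quad |c_i|(\kappa r)^{i-1} \leq |c_1| \cdot \frac{\kappa^{i-1}}{|i|} \quad \text{for all } i \geq 2. \]

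The heart of the proof is then the purely arithmetic inequality $\kappa^{i-1} \leq |i|$ for every integer $i \geq 2$; this is where the choice of $\kappa$ becomes essential, and this is the step I expect to be the main obstacle to verifying the correct constant. When the residue characteristic is $0$, one has $\kappa = 1$ and $|i| = 1$, so the inequality is trivial. When $p > 0$, write $i = p^k m$ with $\gcd(m, p) = 1$, so that $|i| = |p|^k$ and the needed inequality becomes $|p|^{(i-1)/(p-1)} \leq |p|^k$; since $|p| < 1$, this is equivalent to $i - 1 \geq k(p-1)$, and since $i \geq p^k$, it follows from the elementary Bernoulli inequality $p^k \geq 1 + k(p-1)$.

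Combining the two bounds yields $|c_i|(\kappa r)^i \leq |c_1|\kappa r$ for every $i \geq 2$, while the constant term of $F - c_0$ vanishes and $|c_1|\kappa r > 0$. Hence $\sup_i |c_i|(\kappa r)^i$ is attained, and the smallest index at which it is attained is $i = 1$; that is, $F - c_0$ has Weierstrass degree $1$ on $D(a, \kappa r)$. Invoking the result recalled in Section~\ref{sec:prelimw} then delivers the desired bijection $F : D(a, \kappa r) \to D(c_0, |c_1|\kappa r)$. A noteworthy subtlety is that equality $\kappa^{i-1} = |i|$ already occurs at $i = p$, so $\kappa$ is optimal and cannot be enlarged by this argument; this is consistent with the classical convergence radius of the non-archimedean exponential.
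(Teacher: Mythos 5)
Your proof is correct and follows essentially the same route as the paper: reduce to showing $F - c_0$ has Weierstrass degree $1$ on $D(a,\kappa r)$ via the coefficient bound $|i c_i| r^{i-1} \leq |c_1|$ and the arithmetic inequality $\kappa^{i-1} \leq |i|$. The one place you add value is that you actually prove the inequality $\kappa^{i-1} \leq |i|$ (via $p^k - 1 \geq k(p-1)$ for $i = p^k m$), whereas the paper dispatches it with a terse ``by definition of $\kappa$.''
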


\begin{proof}
Because $F$ has no critical points in $D(a, r)$, the power series $F'$
has Weierstrass degree zero on this disk, and hence
\[ |n c_n| r^{n-1} \leq |c_1| \quad \text{for all }n \in \NN. \]
In addition, by definition of $\kappa$, we have $\kappa^{n-1} \leq |n|$, and hence
\[ |c_n| (\kappa r)^n \leq |n c_n| \kappa r^n \leq |c_1| (\kappa r) \quad \text{for all }n \in \NN. \]
Therefore, $F-c_0$ has Weierstrass degree $1$ and hence is injective on $D(a, \kappa r)$.
By \cite[Theorem~3.15]{BenBook}, $F$ maps $D(a,\kappa r)$ bijectively onto $D(c_0,|c_1| \kappa r)$.
\end{proof}

\begin{lemma}
\label{lem:SL1new}
Let $f\in\Cv(z)$ be a nonconstant rational map. Suppose that
all poles and (type~I) critical points of $f$ lie in the Fatou set $\FBerkf$.
Then there exists $\epsilon > 0$ such that for any point $a\in\Cv$ for which
$\DBerk(a,\epsilon)\cap\JBerkf\neq\varnothing$, we have that
$f$ maps $D(a,r)$ bijectively onto $D(f(a), |f'(a)|r)$
for any radius $r$ with $0<r\leq\epsilon$.
\end{lemma}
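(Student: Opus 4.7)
The plan is as follows. Let $C\subset\Cv$ denote the finite set of points of $\Cv$ at which $f$ has either a pole or a critical point. By hypothesis $C\subseteq\FBerkf$, and since $\FBerkf$ is open in $\PBerk$, for each $c\in C$ we may choose some $s_c>0$ with $\DbarBerk(c,s_c)\subseteq\FBerkf$. Set $s:=\min_{c\in C}s_c>0$ (the statement is vacuous if $\JBerkf=\varnothing$, and $C$ is nonempty in any case of interest since $f$ is nonconstant with nonempty Julia set), and define $\epsilon:=\kappa s$, with $\kappa$ as in Lemma~\ref{lem:kappa}.

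The key intermediate claim is: if $a\in\Cv$ satisfies $\DBerk(a,s)\cap\JBerkf\neq\varnothing$, then $D(a,s)$ contains no element of $C$. Indeed, suppose some $c\in C\cap D(a,s)$, so that $|c-a|<s$. By the ultrametric inequality, $D(a,s)=D(c,s)$, and since $s\le s_c$ it follows that
\[
\DBerk(a,s)=\DBerk(c,s)\subseteq\DbarBerk(c,s_c)\subseteq\FBerkf,
\]
contradicting the assumed intersection with $\JBerkf$. Since $\epsilon\le s$, the hypothesis $\DBerk(a,\epsilon)\cap\JBerkf\neq\varnothing$ implies $\DBerk(a,s)\cap\JBerkf\neq\varnothing$, and the claim then guarantees that $D(a,s)$ is free of critical points and poles of $f$.

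Given such $a$ and any $0<r\le\epsilon$, we have $r/\kappa\le s$, so the smaller disk $D(a,r/\kappa)\subseteq D(a,s)$ is also disjoint from $C$. Consequently, $f$ is represented on $D(a,r/\kappa)$ by a convergent power series $F\in\Cv[[z-a]]$ having no critical points on that disk. Applying Lemma~\ref{lem:kappa} to $F$ with radius $r/\kappa$ yields a bijection
\[
F:\,D(a,r)=D(a,\kappa\cdot r/\kappa)\;\longrightarrow\;D\big(f(a),\,|f'(a)|\,r\big),
\]
and since $F$ agrees with $f$ on this disk, this is the desired conclusion.

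The main technical obstacle is the shrinkage factor $\kappa$ forced by Lemma~\ref{lem:kappa}: to conclude injectivity on $D(a,r)$, one must know that $f$ has no critical points on the slightly larger disk $D(a,r/\kappa)$, not merely on $D(a,r)$ itself. Taking $\epsilon:=\kappa s$ instead of $\epsilon:=s$ precisely absorbs this factor and lets the separation between $\JBerkf$ and the finite set $C\subseteq\FBerkf$ translate into a uniform bijectivity radius valid for all $a$ whose $\epsilon$-neighborhood meets the Julia set.
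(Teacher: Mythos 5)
Your proof is correct and follows essentially the same approach as the paper: both use the finiteness of the set of poles and critical points to extract a uniform radius $s$ (the paper's $\epsilon_0$) of Fatou disks around them, the ultrametric trick to show a disk meeting $\JBerkf$ avoids this set, and then Lemma~\ref{lem:kappa}. The only cosmetic difference is that you invoke Lemma~\ref{lem:kappa} directly at radius $r/\kappa$ for each $r\le\epsilon$, whereas the paper applies it once at radius $\epsilon_0$ and implicitly uses that Weierstrass degree~1 persists on smaller concentric disks; both are sound.
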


\begin{proof}
Denote by $\CP(f)$ the set of poles and (type~I) critical points of $f$.
Since each $c\in\CP(f)$ lies in $\FBerkf$, 
there is an associated radius $\delta_c>0$ such that
$\DBerk(c,\delta_c)\subseteq\FBerkf$. Because $\CP(f)$ is finite, we may define
\[ \epsilon_0:= \min\{ \delta_c \, | \, c\in\CP(f) \} >0
\quad\text{and}\quad \epsilon:=\kappa \epsilon_0 > 0  .\]

For any $a\in\Cv$ for which $\DBerk(a,\epsilon)$ intersects $\JBerkf$,
the larger disk $\DBerk(a,\epsilon_0)$ also intersects $\JBerkf$,
and hence cannot contain any points of $\CP(f)$. After all,
if $c\in \CP(f)$ lies in $\DBerk(a,\epsilon_0)$, then
$\DBerk(a,\epsilon_0) = \DBerk(c,\epsilon_0)$ is contained in $\FBerkf$,
a contradiction.

For such $a\in\Cv$, since $f$ has no poles in $D(a,\epsilon_0)$,
we may write $f$ as a power series
\[ f(z) = \sum_{i = 0}^{\infty} c_i (z - a)^i \in \Cv[[z-a]]  \]
converging on $D(a,\epsilon_0)$, with $c_0=f(a)$ and $c_1=f'(a)$.
By Lemma~\ref{lem:kappa}, for $0<r\leq\epsilon$, we have that
$f$ maps $D(a,r)$ bijectively onto $D(f(a), |f'(a)| r)$.
\end{proof}

%
%
%
%

\section{Technical Lemmas}
\label{sec:proofs}

To prepare for the proof itself, we need to set some notation
and present several technical lemmas.
Throughout this section, we assume $f\in\Cv(z)$ is as in Theorem~\ref{MAIN}.
By Lemma~\ref{lem:SL0new}, we may assume that $\JBerkf \subseteq \DbarBerk(0,1)$,
with $|f(x)| > 1$ for $|x| > 1$,
and such that $(f^n)^{\nat}(\zeta) \geq \delta$ for all $\zeta\in\JBerkf$ and $n\in\NN$.
Choose $\epsilon > 0$ as in Lemma~\ref{lem:SL1new}; thus, 
$f$ is injective on $D(a, \epsilon)$ for any $a\in\Cv$ for which
$\DBerk(a,\epsilon)\cap\JBerkf\neq\varnothing$.
Without loss, assume that $\delta,\epsilon < 1$.

For each $\zeta\in\JBerkf$, define the real quantities
\[ \sigma (\zeta) := \inf \big\{ (f^n)^{\nat}(\zeta) \, \big| \, n\in\NN_0 \big\},
\qquad
\nu(\zeta) := \frac{\delta^2 \epsilon}{\sigma (\zeta)}, \]
and for each $n\in\NN_0$,
\[ \mu_n( \zeta ) := \frac{\nu\big(f^n(\zeta)\big)}{ (f^n)^{\nat}(\zeta) \nu( \zeta ) }
= \frac{\sigma(\zeta)}{(f^n)^{\nat}(\zeta) \sigma\big(f^n(\zeta)\big)}. \]
The function $\sigma : \JBerkf \to \RR$ will serve as a local scaling factor
with respect to which $f$ will be everywhere expanding on $\JBerkf$
(see Lemma~\ref{lem:KL1new}.(b) below).

We also partition $\JBerkf$ into two pieces:
\begin{align*}
\JBerkf^+ & := \{\zeta\in\JBerkf \, | \, \sphdiam(\zeta) \geq \nu(\zeta) \}, \quad\text{and}
\\
\JBerkf^0 & := \{\zeta\in\JBerkf \, | \, \sphdiam(\zeta) < \nu(\zeta) \}.
\end{align*}
Moreover, for each $n\in\NN$, define $\JBerkf^n:= f^{-n}(\JBerkf^0)$.

Finally, we cover $\JBerkf^0$ with open disks, by setting
\[ \bOmega:=\bigcup_{\zeta\in\JBerkf^0} \DBerk\big(\zeta,\nu(\zeta)\big) \]
We will multiply the radii $\nu(\zeta)$ by the contracting factors $\mu_n(\zeta)$ to produce even smaller
neighborhoods of $\JBerkf^n$.


\begin{lemma}
\label{lem:KL1new}
For any $\zeta\in\JBerkf$, the following statements hold.
\begin{enumerate}
\item $\delta \leq \sigma(\zeta) \leq 1$ and
$\delta^2\epsilon \leq\nu(\zeta)\leq \delta\epsilon$.

\item $\dsps f^{\nat}(\zeta) \sigma \big( f(\zeta) \big) \geq \sigma (\zeta)$.
\end{enumerate}
\end{lemma}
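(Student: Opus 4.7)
The plan is to handle parts (a) and (b) in turn; both follow directly from the definition of $\sigma$ together with the standing hypotheses and the chain rule.

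For part (a), the first step is to observe that $\sigma(\zeta) \leq 1$ by taking $n = 0$ in the infimum: since $f^{0} = \mathrm{Id}$ lies in $\PGL(2,\calO)$, equation~\eqref{eq:natinvar} gives $(f^0)^{\nat}(\zeta) = 1$, so the infimum defining $\sigma(\zeta)$ is at most $1$. For the lower bound, the hypothesis of Theorem~\ref{MAIN} gives $(f^n)^{\nat}(\zeta) \geq \delta$ for every $n \in \NN$, and the $n = 0$ term equals $1 \geq \delta$ because we have arranged $\delta < 1$. Hence every term in the infimum is at least $\delta$, so $\sigma(\zeta) \geq \delta$. The bounds on $\nu(\zeta) = \delta^{2}\epsilon / \sigma(\zeta)$ then follow by straightforward division: $\sigma(\zeta) \leq 1$ yields $\nu(\zeta) \geq \delta^{2}\epsilon$, and $\sigma(\zeta) \geq \delta$ yields $\nu(\zeta) \leq \delta\epsilon$.

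For part (b), the idea is that $\sigma$ behaves like a cocycle for $f^{\nat}$. Applying the chain rule from Proposition~\ref{prop:chainrule} to $f^{n+1} = f^n \circ f$, I write, for each $n \in \NN_{0}$,
\[
(f^{n+1})^{\nat}(\zeta) = (f^{n})^{\nat}\!\big(f(\zeta)\big) \cdot f^{\nat}(\zeta).
\]
Now $f^{\nat}(\zeta)$ is a positive real number on $\JBerkf$ (it is at least $\delta > 0$ by the hypothesis with $n = 1$), and in particular does not depend on $n$, so it factors out of the infimum over $n$. Taking $\inf_{n\in\NN_0}$ on both sides gives
\[
f^{\nat}(\zeta)\cdot\sigma\big(f(\zeta)\big)
= \inf_{n \in \NN_{0}} (f^{n+1})^{\nat}(\zeta)
\geq \inf_{m \in \NN_{0}} (f^{m})^{\nat}(\zeta)
= \sigma(\zeta),
\]
where the inequality holds because $\{n+1 : n \in \NN_0\} = \NN \subseteq \NN_0$, so the infimum on the left runs over a subset of the indices on the right.

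There is no real obstacle in either part; the only thing to verify carefully is that $f^{\nat}(\zeta)$ may legitimately be pulled out of the infimum defining $\sigma(f(\zeta))$, which is immediate from it being a strictly positive real number on $\JBerkf$ by the $n = 1$ case of the standing hypothesis.
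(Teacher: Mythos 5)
Your proof is correct and takes essentially the same approach as the paper's: part (a) by taking $n=0$ for the upper bound and the standing hypothesis for the lower bound, and part (b) by applying the chain rule to $f^{n+1}=f^{n}\circ f$, factoring $f^{\nat}(\zeta)$ out of the infimum, and comparing the infimum over $\NN$ to that over $\NN_0$. Your version is marginally more careful in noting explicitly that the $n=0$ term equals $1\geq\delta$, which cleanly justifies the lower bound $\sigma(\zeta)\geq\delta$ even though the theorem's hypothesis is stated only for $n\in\NN$.
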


\begin{proof}
\textbf{(a)}. For any $\zeta\in\JBerkf$, choosing $n=0$ in the definition of $\sigma(\zeta)$
yields the upper bound $\sigma(\zeta)\leq 1$. The lower bound follows from the hypothesis
that $(f^n)^{\nat}(\zeta)\geq \delta$ for all $n\in\NN_0$.
The bounds on $\nu$ follow immediately.

\medskip

\textbf{(b)}.
For any $n\in\NN_0$, we have
\[ f^{\nat}(\zeta) \cdot (f^n)^{\nat}\big(f(\zeta)\big)
= \big(f^{n+1}\big)^{\nat}(\zeta) , \]
by Proposition~\ref{prop:chainrule}.
Taking the infimum over all $n\in\NN_0$, we have
\begin{align*}
f^{\nat}(\zeta) \sigma \big( f(\zeta) \big) & = \inf\Big\{ \big(f^n\big)^{\nat}(\zeta) \, \Big| \, n\in\NN \Big\}
\\
& \geq \inf\Big\{ \big(f^n\big)^{\nat}(\zeta) \, \Big| \, n\in\NN_0 \Big\} = \sigma(\zeta).
\qedhere
\end{align*}
\end{proof}

\begin{lemma}
\label{lem:KL3new}
For each $\zeta \in \JBerkf$, we have
\begin{enumerate}
\item  $\dsps \mu_n( \zeta ) = \prod_{i = 0}^{n - 1} \mu_{1}\big(f^i(\zeta)\big)$ for all $n\in\NN$.
\item $1=\mu_0(\zeta) \geq \mu_1(\zeta) \geq \mu_2(\zeta)\geq \cdots$ .
\item If $\zeta\in\JBerkf^+$, then $\dsps f(\zeta)\in\JBerkf^+$.
\end{enumerate}
\end{lemma}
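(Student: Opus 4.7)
The plan is to prove each of the three parts by direct calculation, leaning on the multiplicative formula for $\mu_n$, the chain rule (Proposition~\ref{prop:chainrule}), Lemma~\ref{lem:KL1new}(b), and the identity $\sphdiam(f(\zeta)) = f^{\nat}(\zeta) \sphdiam(\zeta)$ from Proposition~\ref{prop:sphcomp}(b). All three parts will be quick consequences of these ingredients once the bookkeeping is done correctly.

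For part (a), I would use the second formula $\mu_n(\zeta) = \sigma(\zeta) / \bigl[(f^n)^{\nat}(\zeta) \sigma(f^n(\zeta))\bigr]$ and observe that the chain rule gives $(f^n)^{\nat}(\zeta) = \prod_{i=0}^{n-1} f^{\nat}(f^i(\zeta))$. Then the product
\[
\prod_{i=0}^{n-1} \mu_1\bigl(f^i(\zeta)\bigr)
= \prod_{i=0}^{n-1} \frac{\sigma(f^i(\zeta))}{f^{\nat}(f^i(\zeta)) \, \sigma(f^{i+1}(\zeta))}
\]
telescopes in the $\sigma$-factors to give $\sigma(\zeta)/\sigma(f^n(\zeta))$ times $1/(f^n)^{\nat}(\zeta)$, which is exactly $\mu_n(\zeta)$.

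For part (b), the equality $\mu_0(\zeta) = 1$ is immediate from $(f^0)^{\nat}(\zeta) = 1$ and $f^0(\zeta) = \zeta$. By part (a), $\mu_{n+1}(\zeta) = \mu_n(\zeta) \cdot \mu_1(f^n(\zeta))$, so monotonicity reduces to showing $\mu_1(\xi) \le 1$ for every $\xi \in \JBerkf$. Unwinding the definition, this inequality reads $\sigma(\xi) \le f^{\nat}(\xi) \sigma(f(\xi))$, which is precisely Lemma~\ref{lem:KL1new}(b).

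For part (c), the first step is to note that every type~I point $x \in \PCv$ has $\sphdiam(x) = \|x,x\| = \rho(x,x) = 0$, while $\nu(\zeta) \ge \delta^2 \epsilon > 0$; hence $\JBerkf^+ \subseteq \HBerk$. For any $\zeta \in \JBerkf^+$, Proposition~\ref{prop:sphcomp}(b) then applies and gives $\sphdiam(f(\zeta)) = f^{\nat}(\zeta) \sphdiam(\zeta) \ge f^{\nat}(\zeta) \nu(\zeta)$. Replacing $\nu$ by $\delta^2 \epsilon / \sigma$, the desired inequality $\sphdiam(f(\zeta)) \ge \nu(f(\zeta))$ is equivalent to $f^{\nat}(\zeta) \sigma(f(\zeta)) \ge \sigma(\zeta)$, which is once again Lemma~\ref{lem:KL1new}(b). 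The only place where there is anything to worry about is confirming that $\JBerkf^+ \subseteq \HBerk$, so that Proposition~\ref{prop:sphcomp}(b) is available; everything else is straightforward symbol pushing.
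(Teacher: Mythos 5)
Your proof is correct and follows essentially the same route as the paper: the telescoping product for part (a), reduction to Lemma~\ref{lem:KL1new}(b) for part (b), and the chain of inequalities starting from $\sphdiam(f(\zeta)) = f^{\nat}(\zeta)\sphdiam(\zeta)$ for part (c). Your observation that $\JBerkf^+\subseteq\HBerk$ (to justify invoking Proposition~\ref{prop:sphcomp}(b)) is a sensible precaution, though the identity $\sphdiam(f(\zeta))=f^{\nat}(\zeta)\sphdiam(\zeta)$ holds trivially for type~I points as well, so the paper omits this remark.
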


\begin{proof}
\textbf{(a)}.
Given $\zeta\in\JBerkf$ and $n\in\NN$, Proposition~\ref{prop:chainrule} yields
\begin{align*}
\prod_{i=0}^{n-1} \mu_1\big( f^i(\zeta) \big) &=
\prod_{i=0}^{n-1} \frac{ \sigma\big(f^i(\zeta) \big)}
{ f^{\nat}\big( f^i(z) \big) \sigma\big(f^{i+1}(z)\big) }
= \bigg(\prod_{i=0}^{n-1} \frac{ \sigma\big(f^i(\zeta) \big)}{\sigma\big(f^{i+1}(\zeta)\big) }\bigg) \cdot
\bigg(\prod_{i=0}^{n-1} \frac{1}{f^{\nat}\big( f^i(z) \big) } \bigg)
\\
& =  \frac{\sigma(\zeta)}{\sigma\big(f^n(\zeta)\big)} \cdot \frac{1}{(f^n)^{\nat}(\zeta)}= \mu_n(\zeta).
\end{align*}

\textbf{(b)}.
For any $\zeta\in\JBerkf$, clearly $\mu_0(\zeta)=1$.
Observe that
\[ \mu_1\big( f^i(\zeta) \big) \leq 1
\quad \text{for any } i\in\NN_0, \]
by Lemma~\ref{lem:KL1new}(b) applied to $f^i(\zeta)$. Thus, part~(a) of the current lemma
immediately implies part~(b).

\medskip

\textbf{(c)}.
For $\zeta\in\JBerkf^+$, we have
\begin{align*}
\sphdiam\big(f(\zeta)\big) &= f^{\nat}(\zeta)\sphdiam(\zeta) \geq f^{\nat}(\zeta) \nu(\zeta)
\\
& = \frac{f^{\nat}(\zeta)}{\sigma(\zeta)} \cdot \delta^2 \epsilon
\geq \frac{\delta^2\epsilon}{\sigma\big(f(\zeta)\big)} = \nu\big(f(\zeta)\big),
\end{align*}
where the first equality is by definition of $f^{\nat}$, the second and third equalities
are by definition of $\nu$, the first inequality is because $\zeta\in\JBerkf^+$,
and the second inequality is by Lemma~\ref{lem:KL1new}(b).
\end{proof}

It is immediate from Lemma~\ref{lem:KL3new}(c) that $\JBerkf^0\supseteq\JBerkf^1\supseteq\JBerkf^2\supseteq\cdots$.

\begin{lemma}\label{lem:KL4new}
For any $n \in \mathbb{N}_0$,
\[ f^{-n}(\bOmega) = \bigcup_{\zeta \in \JBerkf^n } \DBerk\big(\zeta, \mu_{n}(\zeta) \nu(\zeta) \big). \]
Moreover, we have
$\bOmega\supseteq f^{-1}(\bOmega) \supseteq f^{-2}(\bOmega)
\supseteq \cdots$.
\end{lemma}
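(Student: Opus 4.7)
The plan is to induct on $n$, establishing simultaneously the main set identity and the following auxiliary claim:
\begin{quote}
\emph{For every $\zeta \in \JBerkf^n$, the map $f^n$ restricts to a bijection of $\DBerk(\zeta,\mu_n(\zeta)\nu(\zeta))$ onto $\DBerk(f^n(\zeta),\nu(f^n(\zeta)))$.}
\end{quote}
The base case $n=0$ is trivial, since $\mu_0 \equiv 1$ and $\JBerkf^0$ is, by definition, the indexing set of $\bOmega$.

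For the inductive step it suffices to prove the single-step version of the auxiliary claim: given $\zeta \in \JBerkf^{n+1}$ and $\eta := f(\zeta) \in \JBerkf^n$, I would show that $f$ maps $\DBerk(\zeta,\mu_{n+1}(\zeta)\nu(\zeta))$ bijectively onto $\DBerk(\eta,\mu_n(\eta)\nu(\eta))$ and then compose with the inductive hypothesis applied to $\eta$. By Lemmas~\ref{lem:KL1new}(a) and~\ref{lem:KL3new}(b) the preimage radius satisfies $\mu_{n+1}(\zeta)\nu(\zeta) \leq \nu(\zeta) \leq \delta\epsilon < \epsilon$. Choosing a type~I representative $a \in \calO$ with $\zeta \in \DBerk(a,\mu_{n+1}(\zeta)\nu(\zeta))$ (available from the density of type~I points, and with $a \in \calO$ because $\JBerkf \subseteq \DbarBerk(0,1)$), Lemma~\ref{lem:SL1new} yields the required classical bijection, which lifts to Berkovich disks by the equivalences in Section~\ref{sec:berkdyn}. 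Lemma~\ref{lem:injdiam} identifies the image radius as $f^{\nat}(\zeta)\cdot\mu_{n+1}(\zeta)\nu(\zeta)$, and a direct algebraic simplification using the chain rule (Proposition~\ref{prop:chainrule}) together with the definitions of $\sigma$, $\nu$, and $\mu$ shows this expression equals $\mu_n(\eta)\nu(\eta)$, as required.

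To promote the auxiliary claim for $n+1$ to the main set identity, I pass to preimages: $f^{-(n+1)}(\bOmega) = f^{-1}(f^{-n}(\bOmega))$, so by the inductive hypothesis it remains to analyze $f^{-1}(\DBerk(\eta,\mu_n(\eta)\nu(\eta)))$ for each $\eta \in \JBerkf^n$. All $d$ classical preimages of $\eta$ lie in $\JBerkf$ by total invariance, and hence in $\JBerkf^{n+1} = f^{-1}(\JBerkf^n)$; each has local degree $1$ because no critical point of $f$ lies in $\JBerkf$. A counting argument---two overlapping small disks would force the distinct preimages of $\eta$ they contain to coincide by injectivity on the larger---shows that the $d$ small disks are pairwise disjoint and jointly exhaust the Berkovich preimage, since the local mapping degrees over the connected components of $f^{-1}(\DBerk(\eta,\mu_n(\eta)\nu(\eta)))$ must sum to $d$. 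Taking the union over $\eta \in \JBerkf^n$ and observing that $\bigcup_{\eta \in \JBerkf^n} f^{-1}(\eta) = \JBerkf^{n+1}$ completes the induction.

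The nesting $\bOmega \supseteq f^{-1}(\bOmega) \supseteq \cdots$ then drops out as a corollary: $\JBerkf^{n+1} \subseteq \JBerkf^n$ (by forward invariance of $\JBerkf^+$, Lemma~\ref{lem:KL3new}(c)) and $\mu_{n+1}(\zeta) \leq \mu_n(\zeta)$ for $\zeta \in \JBerkf^{n+1}$ (Lemma~\ref{lem:KL3new}(b)), so each disk in the $(n+1)$st union sits inside the corresponding disk in the $n$th union. The step I expect to be most delicate is the ``joint exhaustion'' claim in the third paragraph: one must rule out extra connected components of $f^{-1}(\DBerk(\eta,\mu_n(\eta)\nu(\eta)))$ beyond the $d$ small disks centered at the classical preimages of $\eta$, which requires combining the local-degree-$1$ property on $\JBerkf$ with the total mapping degree $d$ of $f$ on $\PBerk$.
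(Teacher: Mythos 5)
Your proof is correct and essentially the same as the paper's: you induct on $n$, use Lemma~\ref{lem:SL1new} to get injectivity of $f$ on disks of radius $\le\epsilon$ around points of $\JBerkf^{n+1}$, check via the chain rule that the image radius is $\mu_n(\eta)\nu(\eta)$, and close the degree count by noting that the $d$ small disks account for all of $f^{-1}(\DBerk(\eta,\mu_n(\eta)\nu(\eta)))$ because $\deg f = d$. Your ``auxiliary claim'' (that $f^n$ bijects $\DBerk(\zeta,\mu_n(\zeta)\nu(\zeta))$ onto $\DBerk(f^n(\zeta),\nu(f^n(\zeta)))$) is just a packaged form of what the paper establishes one step at a time, and the ``joint exhaustion'' point you flag as delicate is exactly the paper's one-line appeal to $\deg f = d$, so you identified the right place where care is needed.

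One small inaccuracy in your third paragraph: you justify local degree $1$ at each preimage $\theta_i$ of $\eta$ by asserting that ``no critical point of $f$ lies in $\JBerkf$.'' That statement is false in general under the hypothesis of Theorem~\ref{MAIN}. For example, $f(z)=z^2$ (in residue characteristic $\ne 2$) has $\JBerkf=\{\zeta(0,1)\}$ and satisfies $(f^n)^{\nat}(\zeta(0,1))=1$ for all $n$, yet $\zeta(0,1)$ is a type~II critical point of local degree~$2$. What actually rules this out here is that the relevant preimages $\theta_i$ lie in $\JBerkf^{n+1}$, so they have small diameter and sit in disks of radius $\mu_{n+1}(\theta_i)\nu(\theta_i)\le\epsilon$ on which Lemma~\ref{lem:SL1new} gives injectivity (Gauss-point-sized critical points are excluded because they land in $\JBerkf^+$, not in any $\JBerkf^m$). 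You did already invoke Lemma~\ref{lem:SL1new} correctly in the preceding paragraph, so the argument survives once the extraneous justification is dropped; but as stated it would mislead a reader into thinking a false global fact about $\JBerkf$ is being used.
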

%
%


\begin{proof}
We prove the equality by induction on $n$. It is trivial for $n=0$.
Assume it holds for some $n = m \in \NN_0$; we will prove it for $m+1$.

For the forward inclusion,
given $\xi \in f^{-(m + 1)}(\bOmega)$, there exists $\zeta\in\JBerkf^m$
such that $f(\xi) \in \DBerk(\zeta, \mu_m(\zeta)\nu(\zeta))$.
Write
\[ f^{-1}(\zeta) = \{ \theta_1,\ldots, \theta_d \} \subseteq \JBerkf ^{m+1}. \]
For each $i=1,\ldots, d$, Lemma~\ref{lem:KL1new}(a) yields
\[ \mu_{m+1}(\theta_i) \nu(\theta_i) = \frac{\nu\big(f^{m+1}(\theta_i)\big)}{ (f^{m+1})^{\nat}(\theta_i)}
\leq \frac{\delta\epsilon}{\delta} = \epsilon .\]
Therefore, by Lemma~\ref{lem:SL1new},
$f$ is injective on each disk $\DBerk(\theta_i, \mu_{m+1}(\theta_i) \nu(\theta_i))$,
scaling distances by a factor of $f^{\nat}(\theta_i)$.
Hence, the points $\theta_1,\ldots,\theta_d$ are indeed distinct, and
\begin{align*}
f\Big( \DBerk \big( \theta_i, \mu_{m+1}(\theta_i) \nu(\theta_i) \big) \Big)
&= \DBerk\big( f(\theta_i), f^{\nat}(\theta_i) \mu_{m+1}(\theta_i) \nu(\theta_i) \big) \\
&= \DBerk\Big( f(\theta_i), \mu_{m}\big(f(\theta_i)\big) \nu\big(f(\theta_i)\big) \Big) \\
&= \DBerk\big( \zeta, \mu_m(\zeta) \nu(\zeta) \big) .
\end{align*}
Since $\deg f=d$, we have accounted for all preimages of $\DBerk(\zeta,\mu_m(\zeta)\nu(\zeta))$.
Thus, there is some $j\in\{1,\ldots, d\}$ such that
\[ \xi \in \DBerk\big(\theta_j, \mu_{m+1}(\theta_j) \nu(\theta_j)\big), \]
completing our proof of the forward inclusion.

Conversely, given $\zeta\in\JBerkf^{m+1}$ and $\xi\in\DBerk(\zeta,\mu_{m+1}(\zeta) \nu(\zeta) )$, we have
\begin{align*}
f(\xi)
& \in f\Big( \DBerk\big( \zeta , \mu_{m+1}(\zeta) \nu(\zeta) \big) \Big)
= \DBerk\big( f(\zeta) , f^{\nat}(\zeta) \mu_{m+1}(\zeta) \nu(\zeta) \big) \\
&= \DBerk\Big( f(\zeta) , \mu_m\big( f(\zeta) \big) \nu\big( f(\zeta) \big) \Big)
\subseteq f^{-m}(\bOmega),
\end{align*}
verifying the reverse inclusion.

Finally, for any $n\in\NN_0$,
we have $\mu_{n+1}(\zeta)\leq \mu_n(\zeta)$
for all $\zeta\in\JBerk^{n+1}$, by Lemma~\ref{lem:KL3new}(b).
Since $\JBerk^{n+1}\subseteq\JBerk^n$,
it follows immediately that $f^{-n-1}(\bOmega)\subseteq f^{-n}(\bOmega)$.
\end{proof}

\begin{lemma}
\label{lem:KL5}
We have
\[ \bigcap_{n\in\NN_0} f^{-n}(\bOmega) = \bigcap_{n\in\NN_0} \JBerkf^n = \JBerkf\cap\Cv .\]
Moreover, for any $\zeta\in\JBerkf\cap\Cv$, we have $\dsps \lim_{n \to \infty} \mu_n(\zeta) = 0$.
\end{lemma}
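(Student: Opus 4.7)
The plan is to establish the double equality via the chain
$\JBerkf\cap\Cv \subseteq \bigcap_n \JBerkf^n \subseteq \bigcap_n f^{-n}(\bOmega) \subseteq \JBerkf\cap\Cv$
and extract the moreover claim as a byproduct. The first two inclusions are straightforward structural observations, while the third—the crux—rests on a non-archimedean Montel-style argument.

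\emph{First two inclusions.} For $x\in\JBerkf\cap\Cv$, each iterate $f^n(x)$ is again a type~I Julia point (by total invariance of $\JBerkf$ and the fact that $f$ sends type~I to type~I), so $\sphdiam(f^n(x))=0<\nu(f^n(x))$, placing $f^n(x)\in\JBerkf^0$ for every $n$ and hence $x\in\bigcap_n\JBerkf^n$. Moreover, each $\zeta\in\JBerkf^0$ tautologically lies in $\DBerk(\zeta,\nu(\zeta))\subseteq\bOmega$, so $\JBerkf^0\subseteq\bOmega$; applying $f^{-n}$ gives $\JBerkf^n=f^{-n}(\JBerkf^0)\subseteq f^{-n}(\bOmega)$.

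\emph{Main inclusion.} For $\xi\in\bigcap_n f^{-n}(\bOmega)$, Lemma~\ref{lem:KL4new} supplies, for each $n$, a point $\theta_n\in\JBerkf^n$ with $\xi\in\DBerk(\theta_n, r_n)$. A calculation from the defining formulas for $\mu_n$, $\nu$, and $\sigma$ simplifies this radius to
\[ r_n := \mu_n(\theta_n)\nu(\theta_n) = \frac{\delta^2\epsilon}{\inf_{k\geq n}(f^k)^{\nat}(\theta_n)}. \]
Disk membership forces $\sphdiam(\xi)\leq\diam(\xi)<r_n$, so once I verify $r_n\to 0$ I obtain $\sphdiam(\xi)=0$, i.e., $\xi$ is type~I; the ultrametric identity $\DBerk(\xi,r_n)=\DBerk(\theta_n,r_n)$ then gives $\theta_n\to\xi$, and closedness of $\JBerkf$ yields $\xi\in\JBerkf\cap\Cv$.

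\emph{Main obstacle: $r_n\to 0$.} I argue by contradiction. If $r_{n_j}\geq\eta>0$ along a subsequence, then there exist $\ell_j\geq n_j$ with $(f^{\ell_j})^{\nat}(\theta_{n_j})\leq M:=\delta^2\epsilon/\eta$. The chain rule (Proposition~\ref{prop:chainrule}) together with the hypothesis $(f^{\ell_j-i})^{\nat}(f^i(\theta_{n_j}))\geq\delta$ then bounds every intermediate derivative by $(f^i)^{\nat}(\theta_{n_j})\leq M/\delta$ for $0\leq i\leq\ell_j$. Compactness of $\JBerkf$ extracts $\theta_{n_j}\to\theta_*\in\JBerkf$, and continuity of $(f^i)^{\nat}$ for each fixed $i$ yields $(f^i)^{\nat}(\theta_*)\leq M/\delta$ for every $i$. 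Lemma~\ref{lem:SL1new} applied at each iterate $f^i(\theta_*)\in\JBerkf$ then produces a radius $r:=\delta\epsilon/M>0$ such that $f^n$ is injective on $\DBerk(\theta_*, r)$ for every $n$, with image contained in $\DbarBerk(0,1+\epsilon)$. Consequently $\bigcup_n f^n(\DBerk(\theta_*, r))$ omits infinitely many points of $\PBerk$, placing $\theta_*$ in the Berkovich Fatou set and contradicting $\theta_*\in\JBerkf$. The moreover claim then follows: for $\zeta\in\JBerkf\cap\Cv$, which lies in every $\JBerkf^n$ by the first inclusion, taking $\theta_n:=\zeta$ above gives $\mu_n(\zeta)\nu(\zeta)\to 0$, so $\mu_n(\zeta)\to 0$ since $\nu(\zeta)>0$.
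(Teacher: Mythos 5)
Your first two inclusions are correct, and your reformulation $r_n=\mu_n(\theta_n)\nu(\theta_n)=\delta^2\epsilon/\inf_{k\ge n}(f^k)^\nat(\theta_n)$ is a correct and pleasant simplification. But the crux of the argument — showing $r_n\to 0$ — has genuine gaps.

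The central problem is the assertion that $(f^i)^\nat$ is continuous, which you use to pass the bound $(f^i)^\nat(\theta_{n_j})\le M/\delta$ to the limit $\theta_*$. This is not established anywhere in the paper, and it is not true in general: by Proposition~\ref{prop:BR4.7}, the spherical kernel (hence $\sphdiam$) is merely \emph{upper} semicontinuous, not continuous, and $f^\nat=\sphdiam\circ f/\sphdiam$ on $\HBerk$ is a ratio of two such functions, so it inherits no semicontinuity in either direction. (Concretely, for $f(z)=z^p$ over $\Cp$ one has $f^\nat(\zeta(0,1))=1$ while $f^\nat(a)=|p|<1$ for every type~I $a$ with $|a|=1$, and type~I points do accumulate at the Gauss point.) What \emph{is} true is that $f^\nat$ is locally constant on small Berkovich disks where $f$ is injective (Lemma~\ref{lem:injdiam} plus Lemma~\ref{lem:SL1new}), but invoking that here requires first showing that $\theta_*$ sits inside such a disk, i.e.\ that $\sphdiam(\theta_*)$ is small; upper semicontinuity of $\sphdiam$ gives the \emph{wrong} inequality ($\limsup_j \sphdiam(\theta_{n_j})\le\sphdiam(\theta_*)$), so this needs a separate ultrametric nesting argument that you do not supply. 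A related gap appears in the final step: writing $\DBerk(\theta_*,r)$ with $r=\delta\epsilon/M$ only makes sense when $\sphdiam(\theta_*)<r$, and that bound is never verified.

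The paper's proof takes a genuinely different and more elementary route that sidesteps compactness, limits, and continuity altogether. Instead of extracting an accumulation point in $\JBerkf$, it works directly with the point $\xi$: it chooses $j$ with $\mu_j(\zeta_j)\nu(\zeta_j)$ within a factor of $\delta^{1/2}$ of the infimum $t$, picks an auxiliary point $\theta$ in the common intersection with $\diam(\theta)\ge\delta^{1/2}t$, pushes $\theta$ forward by the first iterate $f^m$ whose image escapes a disk of radius $\epsilon$, and then reads off a contradiction: Lemma~\ref{lem:injdiam} forces $\sphdiam(f^m(\theta))$ to exceed $\nu(f^m(\zeta_m))$, while membership of $\theta$ in $\DBerk(\zeta_m,\mu_m(\zeta_m)\nu(\zeta_m))$ forces the opposite. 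Everything is a finite, local disk computation — precisely the ultrametric structure your argument attempts to replace by a topological limiting step that non-archimedean $\sphdiam$ does not support.
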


\begin{proof}
The inclusion $(\supseteq)$ in the first equality is immediate from
the definitions of $\bOmega$ and $\JBerkf^n$,
and the inclusion $(\supseteq)$ in the second equality is because $\sphdiam(f^n(\zeta))=0$
for every point $\zeta$ of type~I and every $n\in\NN_0$. Thus, to show these two equalities,
it suffices to show that the first set is contained in the third.

Given $\xi\in\bigcap_{n=0}^{\infty} f^{-n}(\bOmega)$, by Lemma~\ref{lem:KL4new}, there is a sequence
of points $\{\zeta_n\}_{n=0}^{\infty}$ such that for every $n\in\NN_0$, we have
\begin{equation}
\label{eq:zetandef}
\zeta_n\in \JBerkf^n \quad\text{and}\quad \xi \in \DBerk\big( \zeta_n, \mu_n(\zeta_n) \nu(\zeta_n) \big)
\end{equation}
Define
\[ t:= \inf \big\{ \mu_n(\zeta_n) \nu(\zeta_n) \, \big| \, n\in\NN_0 \big\} \geq 0. \]

We claim that $t=0$. If not, i.e., if $t>0$, then there is some $j\in\NN_0$ such that
$t > \delta^{1/2} \mu_j(\zeta_j) \nu(\zeta_j)$.
There must be some $m\in\NN$ such that $f^{m}(\DBerk(\zeta_j,\mu_j(\zeta_j)\nu(\zeta_j)))$
is not contained in an open disk of radius $\epsilon$,
or else $f^i(\DBerk(\zeta_j,\mu_j(\zeta_j)\nu(\zeta_j)))\subseteq \DbarBerk(0,1)$ for all $i\in\NN_0$,
contradicting the fact that $\zeta_j\in\JBerkf$. Let $m$ be the smallest such integer.
Since $f^{i}(\DBerk(\zeta_j,\mu_j(\zeta_j)\nu(\zeta_j)))$ is contained in $\DBerk(f^i(\zeta_j),\epsilon)$
for every $0\leq i<m$, repeated application of Lemma~\ref{lem:SL1new} shows that
$f^m$ maps $\DBerk(\zeta_j,\mu_j(\zeta_j)\nu(\zeta_j))$ bijectively onto 
a disk of radius greater than $\epsilon$.

Choose a point $\theta\in\PBerk$ as follows. If $\diam(\xi) \geq \delta^{1/2} t$, then choose
$\theta:=\xi$; otherwise, choose $\theta$ to be the unique boundary point of the disk
$\DBerk(\xi,\delta^{1/2} t)$. Then
\[ \sphdiam(\theta)=\diam(\theta)\geq \delta^{1/2} t > \delta \mu_j(\zeta_j) \nu(\zeta_j),\]
and in addition, for every $i\in\NN_0$, we have
$\theta\in\DBerk(\zeta_i,\mu_i(\zeta_i)\nu(\zeta_i))$.

Because $\theta$ lies in the disk $\DBerk(\zeta_j,\mu_j(\zeta_j)\nu(\zeta_j))$,
Lemma~\ref{lem:injdiam} applied to $f^m$ implies that
\begin{equation}
\label{eq:thetadiam}
\sphdiam\big( f^m(\theta) \big) >  \frac{\epsilon}{\mu_j(\zeta_j)\nu(\zeta_j)} \cdot \diam(\theta)
> \delta\epsilon \geq \nu\big( f^m(\zeta_m) \big),
\end{equation}
where the last inequality is by Lemma~\ref{lem:KL1new}(a).
However, since $\theta$ also lies in the disk $\DBerk(\zeta_m, \mu_m(\zeta_m)\nu(\zeta_m))$,
we have
\[ f^m(\theta) \in f^m\Big(  \DBerk\big(\zeta_m, \mu_m(\zeta_m)\nu(\zeta_m) \big) \Big)
= \DBerk\Big( f^m(\zeta_m) , \nu\big( f^m(\zeta_m) \big) \Big). \]
Therefore,
$\sphdiam( f^m(\theta)) = \diam(f^m(\theta)) \leq \nu( f^m(\zeta_m))$,
contradicting inequality~\eqref{eq:thetadiam}.
Our claim follows; we must have $t=0$.

The point $\xi$ is therefore contained in disks $\DBerk( \zeta_n, \mu_n(\zeta_n) \nu(\zeta_n) )$
of arbitrarily small positive radius. Thus, $\diam(\xi)=0$, implying that $\xi\in\Cv$.
The points $\zeta_n\in\JBerkf$ accumulate at $\xi$, and hence $\xi\in\JBerkf\cap\Cv$, as desired.

Finally, given $\xi\in\JBerkf\cap\Cv$, we may choose the sequence $\{\zeta_n\}_{n=0}^{\infty}$
of~\eqref{eq:zetandef} to be the constant sequence $\zeta_n:=\xi$.
Since the sequence $\{\mu_n(\xi)\}_{n=0}^{\infty}$ is decreasing by Lemma~\ref{lem:KL3new}(b),
the above claim immediately yields $\mu_n(\xi)\to 0$.
\end{proof}

\begin{remark} 
\label{rem:noperiodic}
One consequence of Lemma~\ref{lem:KL5} is that every
periodic Julia point in $\HBerk$ belongs to $\JBerkf^+$.
Indeed, if $\zeta \in \JBerkf^0\cap\Omega$ is periodic
of period $\ell\geq 1$, then Lemma~\ref{lem:KL5} gives
\[ \zeta \in \bigcap_{n \in \NN_0}f^{-n\ell}(\Omega)
= \bigcap_{n \in \NN_0}f^{-n}(\Omega) = \JBerkf \cap \Cv, \]
where we have also applied Lemma~\ref{lem:KL4new}.
In particular, there is a uniform lower bound for
the spherical diameter the set of
Julia periodic points that are not of type~I.

On the other hand, such a lower bound does not hold
in general without the bounded contraction hypothesis
of Theorem~\ref{MAIN}.
For instance, the map of Example~10.20 of \cite{BenBook} has
an infinite sequence of attracting periodic points $a_n\in\Cv$
accumulating at a type~I Julia point $b$. Each $a_n$ must lie
in a different Fatou disk with a single type~II
repelling periodic point $\zeta_n$ as its boundary.
We have $\zeta_n\to b$, and hence the diameters of
the type~II Julia periodic points $\zeta_n$ must approach zero.
\end{remark} 

\begin{lemma}
\label{lem:nbrhd}
For any $\gamma>0$,
there is an open subset $W$ of $\Rat_d(\Cv)$ containing $f$ such that
for any $g\in W$, we have
\begin{enumerate}
\item $|g(x)|>1$ for any $x\in\Cv$ with $|x|>1$, and
\item $\dsps | g(x) - f(x) | < \gamma$ for all $x\in f^{-1}(\Dbar(0,1))$.
\end{enumerate}
\end{lemma}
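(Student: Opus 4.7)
The plan is to derive both parts from the coefficient representation $\Rat_d(\Cv)\subseteq\PP^{2d+1}(\Cv)$, exploiting the ultrametric inequality together with compactness in the Berkovich setting. After the conjugation of Lemma~\ref{lem:SL0new}, I write $f=F/G$ with $F,G\in\calO[z]$, $\deg G<\deg F=d$, and the leading coefficient $a_d$ of $F$ normalized to attain the maximum absolute value among all coefficients of $F$ and $G$. (The nonrepelling condition at $\infty$, together with the large scaling $|b|\ge R$ in the proof of Lemma~\ref{lem:SL0new}, ensures that $|a_d|$ dominates.) Then for $g=\tilde F/\tilde G$ whose coefficients lie within some $\eta<1$ of those of $f$, the ultrametric inequality yields $|\tilde a_d|=1$, $|\tilde a_i|\le 1$, $|\tilde b_j|\le 1$ for $j<d$, and $|\tilde b_d|<\eta$.

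For part~(a) I argue directly. For $x\in\Cv$ with $|x|>1$, the leading term dominates in $\tilde F$, giving $|\tilde F(x)|=|x|^d$, while $|\tilde G(x)|\le\max(\eta|x|^d,|x|^{d-1})<|x|^d$, so $|g(x)|>1$. The main technical point here is the coefficient normalization, which lets a single non-archimedean estimate succeed globally on the noncompact set $\{|x|>1\}$ without requiring any uniform-continuity argument.

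For part~(b) I use compactness in the Berkovich setting. The set $K:=f^{-1}(\DbarBerk(0,1))$ is a closed subset of $\PBerk$, hence compact; by part~(a) applied to $f$ itself it lies in $\DbarBerk(0,1)$; and it contains no poles of $f$, since those would map to $\infty$. Hence the continuous function $\zeta\mapsto\|G\|_\zeta$ is positive on $K$, so bounded below by some $c>0$. For $g$ in a small neighborhood $W$ of $f$, continuity gives $\|\tilde G\|_\zeta\ge c/2$ on $K$, while all coefficients of the polynomial $N:=\tilde F G-F\tilde G$ can be made arbitrarily small (each is a bilinear expression in the coefficients of $f$ and $g-f$). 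For $x\in f^{-1}(\Dbar(0,1))\subseteq\calO$ we have
\[
    |g(x)-f(x)|=\frac{|N(x)|}{|G(x)|\,|\tilde G(x)|}
    \le \frac{2\max_{k}|N_k|}{c^{2}},
\]
which is less than $\gamma$ once $W$ is small enough.
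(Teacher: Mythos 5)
Your proposal is correct, and part~(a) follows the paper's argument essentially verbatim: after the normalization of Lemma~\ref{lem:SL0new} (including the large scaling $|b|\geq R$, which is what forces $|a_i|,|b_i|\leq |a_d|$, not the inequality $|f(x)|>1$ alone), a dominating leading-coefficient estimate gives $|g(x)|>1$ for $|x|>1$. Where you genuinely diverge from the paper is part~(b). The paper obtains a uniform lower bound $C$ on $|G(x)|$ by an explicit, elementary construction: it lists the poles $y_1,\ldots,y_\ell$ of $f$ in $\Dbar(0,1)$, removes small disks $D(y_i,r)$ around them, and takes $C$ to be the minimum of $\|G\|_{\zeta(y_i,r)}$. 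You instead observe that $K:=f^{-1}(\DbarBerk(0,1))$ is compact in $\PBerk$, contains no zeros of $G$, and that $\zeta\mapsto\|G\|_\zeta$ is continuous, so the lower bound exists by compactness. This is cleaner and more conceptual; the paper's version is more hands-on and yields an explicit $C$, but both establish the same bound, and the final estimate $|g(x)-f(x)|=|N(x)|/(|G(x)||\tilde G(x)|)$ with $N=\tilde FG-F\tilde G$ is identical. Two minor cosmetic points: the factor of $2$ in your final inequality is an archimedean reflex and is not needed (if $\max_j|\tilde b_j-b_j|<c$, the ultrametric gives $\|\tilde G\|_\zeta=\|G\|_\zeta\geq c$, not merely $c/2$); and you silently assume the projective normalization $|a_d|=1$ when you write $|\tilde a_d|=1$, which should be stated.
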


\begin{proof}
Write $f=F/G$ for relatively prime polynomials $F,G\in \Cv[z]$, with
\[ F(z) = a_d z^d + \cdots + a_0 \quad\text{and}\quad G(z) = b_d z^d + \cdots + b_0 ,\]
and write an arbitrary $g\in\Rat_d(\Cv)$ as $\tilde{F}/\tilde{G}$, with $\tilde{F}, \tilde{G}\in\Cv[z]$ given by
\begin{equation}
\label{eq:gcoeffs}
\tilde{F}(z) = A_d z^d + \cdots + A_0 \quad\text{and}\quad \tilde{G}(z) = B_d z^d + \cdots + B_0 .
\end{equation}
As we assumed at the start of this section,
we have $|f(x)|>1$ for any $x\in\Cv$ with $|x|>1$.
Therefore, $a_d \neq 0$, and $|a_i|,|b_i|\leq |a_d|$ for all $i=0,\ldots, d$.
Let $W_1$ be the subset of $\Rat_d(\Cv)$ defined by the (open) conditions
\[ |A_i-a_i|<|a_d| \quad\text{and}\quad |B_i-b_i|<|a_d| \quad\text{for all } i=0,\ldots, d .\]
Then any $g\in W_1$ has $|g(x)|>1$ for any $x\in\Cv$ with $|x|>1$.

Let $y_1,\ldots,y_{\ell}$ denote the distinct poles of $f$ in $\Dbar(0,1)$,
and choose a radius $0<r<1$ so that
$f(D(y_i,r))\subseteq\PCv\smallsetminus\Dbar(0,1)$ for each $i$.
Then
\[ |G(x)| \geq C \quad \text{for all }
x\in \Dbar(0,1) \smallsetminus \big( D(y_1, r) \cup \cdots\cup D(y_\ell, r) \big), \]
where
\[ C:= \min\{\|G\|_{\zeta(y_1,r)},\ldots, \|G\|_{\zeta(y_\ell,r)} \} >0
\quad\text{if } \ell\geq 1,\]
or $C:=|G(0)|=\|G\|_{\zeta(0,1)}$ if $\ell=0$.


With notation as in equations~\eqref{eq:gcoeffs}, define $W_2$ to be
the open neighborhood of $f$ in $\Rat_d(\Cv)$ given by the conditions
\[ |A_i - a_i| < \frac{C^2 \gamma}{|a_d|} \quad\text{and}\quad  |B_i - b_i|
< \min \bigg\{ \frac{C^2\gamma}{|a_d|}, C \bigg\} \quad \text{for each } i. \]
Then any $g= \tilde{F}/\tilde{G} \in W_2$ satisfies
\[ \big|\tilde{F}(x)-F(x)\big| < \frac{C^2 \gamma}{|a_d|} \quad \text{and}\quad
\big|\tilde{G}(x) -G(x)\big| <  \min \bigg\{ \frac{C^2\gamma}{|a_d|}, C \bigg\}
\quad \text{for all }x\in\Dbar(0,1). \]
Therefore, for any $g=\tilde{F}/\tilde{G} \in W_2$ and $x\in\Cv$ such that $|f(x)|\leq 1$, we have
\[ \big|\tilde{G}(x) - G(x)\big| < C \leq |G(x)|,
\quad\text{and hence}\quad \big|\tilde{G}(x)\big|=|G(x)| \geq C. \]
Thus,
\begin{align*}
\big|g(x) - f(x)\big| &=
\frac{\big| G(x) \big(\tilde{F}(x)-F(x)\big) - F(x) \big(\tilde{G}(x)-G(x)\big) \big|}{|G(x)|\cdot |\tilde{G}(x)|}
\\
&\leq \frac{1}{C^2} \max\big\{ |a_d| |\tilde{F}(x) - F(x)|, |a_d| |\tilde{G}(x)-G(x)| \big\}
\\
& < \frac{1}{C^2} \max\big\{ C^2\gamma, C^2\gamma \big\} = \gamma
\end{align*}
Finally, defining $W:=W_1\cap W_2$, we are done.
\end{proof}

\begin{lemma}
\label{lem:inj}
Let $W\subseteq\Rat_d(\Cv)$ be the open neighborhood of $f$ from
Lemma~\ref{lem:nbrhd} for some $\gamma$ with $0<\gamma <\delta^2\epsilon$.
Then for any $g\in W$ and any $\zeta\in\JBerkf^1$, we have $f^{\nat}(\zeta) = g^{\nat}(\zeta)$, and
\[ g \text{ maps } D\big(\zeta, \mu_1(\zeta)\nu(\zeta) \big)
\text{ bijectively onto } D\Big( f(\zeta), \nu\big( f(\zeta) \big) \Big) .\]
Moreover, $g^{-1}(\bOmega) = f^{-1}(\bOmega)$.
\end{lemma}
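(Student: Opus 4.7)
The plan is to use the power-series structure of $f$ on small disks (from Lemma~\ref{lem:SL1new}) together with the smallness $|g-f|<\gamma<\delta^2\epsilon$ from Lemma~\ref{lem:nbrhd} to transfer the Weierstrass-degree-one behavior of $f$ to $g$. Fix $\zeta\in\JBerkf^1$ and set $r:=\mu_1(\zeta)\nu(\zeta)$. I would first verify that the disk $D(\zeta,r)$ is well-defined (contains $\zeta$): from the scaling identity $\sphdiam(f(\zeta))=f^{\nat}(\zeta)\sphdiam(\zeta)$, the defining inequality $\sphdiam(f(\zeta))<\nu(f(\zeta))$ for $\JBerkf^0$, and the fact that $\JBerkf\subseteq\DbarBerk(0,1)$, one gets $\sphdiam(\zeta)=\diam(\zeta)<r$. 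Lemma~\ref{lem:KL1new}(a) gives $r\leq\epsilon$, so Lemma~\ref{lem:SL1new} produces a bijection $f\colon D(\zeta,r)\to D(f(\zeta),\nu(f(\zeta)))\subseteq\Dbar(0,1)$. In particular $D(\zeta,r)\subseteq f^{-1}(\Dbar(0,1))$, so $|g(x)-f(x)|<\gamma$ holds throughout this disk, and $g$ has no poles there.

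Next comes the central power-series estimate. Expand $f=\sum c_i(z-\zeta)^i$ on $D(\zeta,r)$; Weierstrass degree one means $|c_1|r=\nu(f(\zeta))\geq\delta^2\epsilon$ while $|c_i|r^i<|c_1|r$ for $i\geq 2$. Writing $g-f=\sum d_i(z-\zeta)^i$, the bound $|g-f|<\gamma$ on the open disk forces $|d_i|r^i\leq\gamma<\delta^2\epsilon\leq|c_1|r$ for every $i$. Setting $C_i:=c_i+d_i$, one deduces $|C_1|r=|c_1|r$ (since $|d_1|r<|c_1|r$) and $|C_i|r^i<|c_1|r$ for $i\geq 2$, so $g$ has Weierstrass degree one on $D(\zeta,r)$ and bijects it onto $D(g(\zeta),|c_1|r)$, which equals $D(f(\zeta),\nu(f(\zeta)))$ because $|g(\zeta)-f(\zeta)|<\gamma<\nu(f(\zeta))$. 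Applying Lemma~\ref{lem:injdiam} (all relevant disks sit inside $\calO$) yields $g^{\nat}(\zeta)=\nu(f(\zeta))/r=f^{\nat}(\zeta)$, establishing the first two conclusions simultaneously.

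For $g^{-1}(\bOmega)=f^{-1}(\bOmega)$, the $\supseteq$ direction is immediate from the previous paragraph together with Lemma~\ref{lem:KL4new} with $n=1$: each constituent disk of $f^{-1}(\bOmega)$ is mapped by $g$ into $\bOmega$. For $\subseteq$, fix $\eta\in\JBerkf^0$. Since $\JBerkf$ is $f$-invariant and Lemma~\ref{lem:SL1new} forces $f$ to have Weierstrass degree one near every point of $\JBerkf$, the $d$ preimages of $\eta$ under $f$ are distinct simple points $\zeta_1,\ldots,\zeta_d\in\JBerkf^1$ whose disks $D(\zeta_i,\mu_1(\zeta_i)\nu(\zeta_i))$ are pairwise disjoint. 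By the second conclusion, both $f$ and $g$ biject each such disk onto $D(\eta,\nu(\eta))$, so these $d$ disks already exhaust every preimage of each point of $D(\eta,\nu(\eta))$ under the degree-$d$ map $g$, giving $g^{-1}(D(\eta,\nu(\eta)))\subseteq f^{-1}(\bOmega)$. Unioning over $\eta\in\JBerkf^0$ completes the proof.

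The main obstacle I expect is this last degree-count step: in the Berkovich setting one must rule out hidden preimages of $D(\eta,\nu(\eta))$ that are not of type~I. I would handle this exactly as in the inductive step of Lemma~\ref{lem:KL4new}, by first checking the count on classical (type~I) preimages via the standard degree-$d$ covering of $\PCv$, and then using density of type~I points in any open subset of $\PBerk$ to exclude any extraneous Berkovich preimage. The earlier ingredients (verifying that $D(\zeta,r)$ is well-defined and that the ultrametric power-series bookkeeping upgrades to Weierstrass degree one) are routine once assembled in the right order.
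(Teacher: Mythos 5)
Your proof is correct and follows essentially the same route as the paper: bounding the distance between $\zeta$ and a type~I center by $r=\mu_1(\zeta)\nu(\zeta)$, expanding $f$ and $g$ as power series on that disk, using $|a_1|r=\nu(f(\zeta))\geq\delta^2\epsilon>\gamma$ to transfer the Weierstrass-degree-one property from $f$ to $g$ and to conclude $g^{\nat}(\zeta)=f^{\nat}(\zeta)$, and then a degree-$d$ count over the $d$ preimage disks of each $\eta\in\JBerkf^0$ to obtain $g^{-1}(\bOmega)=f^{-1}(\bOmega)$. The only cosmetic differences are that the paper explicitly names a type~I center $x$ (rather than writing the expansion formally ``at $\zeta$'') and reads off $g^{\nat}(\zeta)=|b_1|=|a_1|$ directly from the coefficients instead of invoking Lemma~\ref{lem:injdiam}; the degree-count worry you flag at the end is handled in the paper exactly as you propose, mirroring the argument in Lemma~\ref{lem:KL4new}.
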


\begin{proof}
Given $g$ and $\zeta$ as specified, let $r:=\mu_1(\zeta)\nu(\zeta)$,
so that
\[ \diam(\zeta) = \sphdiam(\zeta) < r \leq \delta \epsilon, \]
by Lemmas~\ref{lem:KL1new}(a) and~\ref{lem:KL3new}(b).
Choose $x\in\Cv$ with $\|z-x\|_\zeta < r$, so that $\zeta\in\DBerk(x,r)\subseteq\bOmega$.
Recall that $f$ has no poles in $\bOmega$,
and hence neither does $g$, by the defining property of $W$ in Lemma~\ref{lem:nbrhd}(b).
Thus, we may expand both $f$ and $g$ as power series
\[ f(z) = \sum_{i=0}^{\infty} a_i (z-x)^i
\quad\text{and}\quad g(z) = \sum_{i=0}^{\infty} b_i (z-x)^i \]
convergent on $D(x,r)$.
Because $r\leq\epsilon$, Lemma~\ref{lem:SL1new} implies that $f-a_0$ is injective on $D(x,r)$
and hence has Weierstrass degree $1$.
That is, $|a_i| r^i \leq |a_1|r$ for all $i\geq 1$. We will show the analogous statement for $g-b_0$.

By the defining property of $W$, the power series
\[ g(z) - f(z) = \sum_{i=0}^{\infty} (b_i-a_i) (z-x)^i \]
satisfies $|g(y)-f(y)|<\gamma$ for $y\in D(x,r)$, and hence
\begin{equation}
\label{eq:biai}
\big|b_i - a_i\big| r^i \leq \gamma
\quad\text{for all } i\in\NN_0.
\end{equation}
On the other hand, we have $|a_1|=|f'(x)|=f^{\nat}(\zeta)$, and therefore
\[ |a_1|r = f^{\nat}(\zeta) \mu_1(\zeta) \nu(\zeta) = \nu\big(f(\zeta)\big) \geq \delta^2\epsilon > \gamma . \]
Combined with~\eqref{eq:biai} for $i=1$, it follows that $|b_1-a_1| < |a_1|$, and hence
\begin{equation}
\label{eq:gfprime}
g^{\nat}(\zeta)=|g'(x)| = |b_1|=|a_1| = f^{\nat}(\zeta) .
\end{equation}
Furthermore, applying~\eqref{eq:biai} for $i\geq 1$, we have
\[ |b_i| r^i  \leq \max \big\{ \big| b_i - a_i \big| r^i, |a_i| r^i \big\} \\
\leq \max \big\{ \gamma , |a_1|r \big\} = |b_1|r \quad \text{for all } i\geq 1. \]
That is, $g-b_0$ has Weierstrass degree 1 on $D(x,r)$.

Thus, $g$ maps $D(\zeta,r)=D(x,r)$ bijectively onto $D(g(x), |g'(x)| r)$.
However, since $|b_1|=|a_1|$ by equation~\eqref{eq:gfprime}, we have
\[ |g'(x)| r = |b_1| r = |a_1|r = \nu\big( f(\zeta)\big). \]
Hence,
\[ \big|g(y)-f(y)\big|<\gamma\leq \delta^2\epsilon\leq \nu \big(f(\zeta)\big) = \big| g'(x)\big| r
\quad\text{for all } y\in D(x,r). \]
Therefore, the image of $D(\zeta,r)$ under $g$ is
\[ D\big( g(x), |g'(x)| r \big) = D\Big( f(x), \nu\big( f(\zeta) \big) \Big) = D\Big( f(\zeta), \nu\big( f(\zeta) \big) \Big) .\]

Lastly, we must show that $g^{-1}(\bOmega) = f^{-1}(\bOmega)$.
For any $\zeta\in\JBerkf^0$,
let $\theta_1,\ldots,\theta_d\in\JBerkf^1$ be the $d$ preimages of $\zeta$ under $f$,
which we know to be distinct as in the proof of Lemma~\ref{lem:KL4new}.
By the first part of the current lemma,
we also know that $g$ maps each disk $D(\theta_i, \mu_1(\theta_i)\nu(\theta_i))$ bijectively
onto $D(\zeta,\nu(\zeta))$, accounting for all $d$ preimages of $D(\zeta,\nu(\zeta))$
under $g$. Thus,
\begin{align*}
g^{-1}\Big( \DBerk\big(\zeta,\nu(\zeta)\big)\Big)
&= \DBerk\big(\theta_1, \mu_1(\theta_1) \nu(\theta_1)\big)
\cup \cdots \cup \DBerk\big(\theta_d, \mu_1(\theta_d) \nu(\theta_d)\big)
\\
&= f^{-1}\Big( \DBerk\big(\zeta,\nu(\zeta)\big)\Big) .
\end{align*}
Taking the union across all $\zeta\in\JBerkf^0$, we have $g^{-1}(\bOmega) = f^{-1}(\bOmega)$.
\end{proof}

\section{Proof of Theorem~\ref{MAIN}}
\label{sec:main}

With notation as in Section~\ref{sec:proofs}, we are now prepared to prove our main result, as follows.
In Step~1, we define a sequence $\{ h_n \}_{n=0}^{\infty}$ of maps from subsets of $\bOmega$ to $\bOmega$,
and we investigate properties of this sequence in Step~2.
Then, in Step~3, we glue the maps $h_n$ to produce the desired map $h:\PBerk\to\PBerk$ 
that is a conjugacy on $f^{-1}(\bOmega)$.
In Steps~4 and~5, we show that $h$ is a homeomorphism on $\PBerk$, and that the conjugacy extends to $f^{-1}(\bOmega)\cup\JBerkf$.
Finally, in Step~6, we show that $h$ varies continuously with $g$. 

\begin{proof}[Proof of Theorem~\ref{MAIN}]
\textbf{Step~1}.
Fix a real number $0<t<1$, and let $W=W_t(f)$ be the neighborhood $W$ of $f$ given by
Lemma~\ref{lem:nbrhd} for $\gamma=t \delta^2\epsilon$. 
For the rest of this proof, consider an arbitrary map $g\in W$.

By Lemma~\ref{lem:inj}, for each $\zeta\in\JBerkf^1$, the map $g$ is injective
on $D(\zeta,\mu_1(\zeta)\nu(\zeta))$, with image
$D(f(\zeta), \nu(f(\zeta)))$. Thus, there exists a map
\[ G_{\zeta} : D\Big(f(\zeta), \nu\big(f(\zeta)\big)\Big)
\to D\big(\zeta,\mu_1(\zeta)\nu(\zeta)\big) \]
which is an inverse to $g$ given by a power series convergent on $D(f(\zeta), \nu(f(\zeta)))$.
Note that if $\xi\in\JBerkf^1$ lies in the same disk $D(\zeta,\mu_1(\zeta)\nu(\zeta))$, then
the power series $G_\zeta$ and $G_\xi$ agree, since $g$ is injective on both 
$D(\zeta,\mu_1(\zeta)\nu(\zeta))$ and $D(\xi,\mu_1(\xi)\nu(\xi))$.
As usual, the power series defining $G_{\zeta}$ extends via continuity to
\begin{equation}
\label{eq:Gzdef}
G_{\zeta} : \DBerk\Big(f(\zeta), \nu\big(f(\zeta)\big)\Big)
\to \DBerk\big(\zeta,\mu_1(\zeta)\nu(\zeta)\big)
\end{equation}

We now define a sequence $\{h_n\}_{n=0}^\infty$ of functions, with $h_0:\PBerk\to\PBerk$ and
\[ h_n : f^{-n}(\bOmega) \to \bOmega \quad\text{for } n\geq 1\]
by the following inductive method.
Let $h_0: \PBerk\to\PBerk$ by $h_0(\zeta):=\zeta$.
For each $n \in \mathbb{N}$, having already defined $h_{n-1}$, we define $h_n$ as follows.
For each $\zeta\in\JBerkf^n$,
define $h_n$ on $\DBerk(\zeta,\mu_n(\zeta)\nu(\zeta))$ by
\[ h_{n} := G_{\zeta} \circ h_{n - 1} \circ f, \]
where $G_{\zeta}$ is the local inverse of $g$ defined in~\eqref{eq:Gzdef}.

\medskip

\textbf{Step~2}.
We will now show that for each $n\in\NN_0$,
\begin{itemize}
\item $h_n$ is a well-defined function mapping
$\dsps f^{-n}(\bOmega)$ bijectively onto $g^{-n}(\bOmega)$,
given by a convergent power series on each disk 
$\DBerk(\zeta, \mu_n(\zeta)\nu(\zeta))$ for $\zeta\in\JBerkf^n$,
\item $h_n$ is an isometry on $f^{-n}(\bOmega)\cap\Cv$, and
\item for every $\zeta\in\JBerkf^n$ and $x\in D(\zeta, \mu_n(\zeta)\nu(\zeta))$,
we have $\big| h_n(x) - x \big| < t \mu_1(\zeta)\nu(\zeta)$, with 
\begin{equation}
\label{eq:hncauchy}
\big| h_{n}(x) - h_{n-1}(x) \big| < t \mu_{n}(\zeta) \nu(\zeta) \quad\text{if } n\geq 1, 
\end{equation}
where $0<t<1$ is the constant we fixed at the start of Step~1. 
\end{itemize}
We proceed by induction. For $n=0$, all three properties hold trivially.

For $n\geq 1$, assume the three bullet points hold for $n-1$.
Then $h_n$ is well-defined because if $\xi$ lies in both
$\DBerk(\zeta,\mu_1(\zeta)\nu(\zeta))$ and $\DBerk(\zeta',\mu_1(\zeta')\nu(\zeta'))$,
then as we noted in Step~1, the power series $G_{\zeta}$ and $G_{\zeta'}$ agree.
That is, the value of $h_n(\xi)$ is independent of which point $\zeta$ is chosen
as the center of the disk.

For any $\zeta\in\JBerkf^n$, it is immediate from Proposition~\ref{prop:chainrule}
and the definition of $\mu_n$ that
\[ f^{\nat}(\zeta)\mu_n(\zeta)\nu(\zeta) =
\frac{\nu\big( f^n(\zeta)\big)}{(f^{n-1})^{\nat}\big( f(\zeta)\big)}
=\mu_{n-1}\big( f(\zeta) \big) \nu\big( f(\zeta) \big), \]
and hence, by Lemmas~\ref{lem:injdiam} and~\ref{lem:SL1new},
$f$ is a convergent power series on the disk $U_n:= D(\zeta, \mu_n(\zeta)\nu(\zeta))$, mapping
\[ D\big(\zeta, \mu_n(\zeta)\nu(\zeta)\big)
\quad\text{bijectively onto}\quad
D\Big(f(\zeta), \mu_{n-1}\big(f(\zeta)\big)\nu\big(f(\zeta)\big) \Big), \]
and multiplying all distances by a factor of $f^{\nat}(\zeta)$.
By our inductive assumptions, $h_{n-1}$ acts as a power series mapping
$D(f(\zeta), \mu_{n-1}(f(\zeta))\nu(f(\zeta)))$
isometrically onto
\[ V_{n-1}:= D\Big(h_{n-1}\big(f(\zeta)\big), \mu_{n-1}\big(f(\zeta)\big)\nu\big(f(\zeta)\big) \Big)
\subseteq D\Big(f(\zeta), \nu\big(f(\zeta)\big) \Big), \]
where the inclusion is because of the inductive assumption that
\[ \big|h_{n-1}\big(f(x)\big)-f(x)\big| < t \mu_1\big(f(\zeta)\big)\nu\big(f(\zeta)\big) < \nu\big( f(\zeta) \big) 
\quad\text{for all } x\in V_{n-1}. \]
Thus, $G_{\zeta}$ is defined as an injective power series on the disk
$V_{n-1}=h_{n-1}( f(U_n))$, multiplying all distances by $(g^{\nat}(\zeta))^{-1} = (f^{\nat}(\zeta))^{-1}$,
where this equality is by Lemma~\ref{lem:inj}.
Therefore, $h_n=G_{\zeta} \circ h_{n-1}\circ f$ is a power series on $U_n$, mapping
\begin{equation}
\label{eq:hnisom}
D\big( \zeta, \mu_n(\zeta) \nu(\zeta) \big)
\quad\text{isometrically onto}\quad
D\big( h_n(\zeta), \mu_n(\zeta) \nu(\zeta) \big) .
\end{equation}

We will prove $h_n$ is an isometry on all of $f^{-n}(\bOmega)\cap\Cv$ shortly, but first
we prove the third bullet point for our given $n\geq 1$.
Given $\zeta\in\JBerkf^n$ and $x\in D(\zeta,\mu_n(\zeta)\nu(\zeta))$,
we first claim that
\begin{equation}
\label{eq:hntemp}
\big| h_{n-1} \big( f(x) \big) - g \big( h_{n-1} (x) \big) \big| <
t \mu_{n-1}\big( f(\zeta) \big) \nu\big(f(\zeta)\big).
\end{equation}
Indeed, if $n=1$, we have $|f(x)-g(x)| < t \delta^2\epsilon \le t\nu(f(\zeta))$ 
by Lemmas~\ref{lem:KL1new}(a) and~\ref{lem:nbrhd},
yielding~\eqref{eq:hntemp}.
If $n\geq 2$, we have $g(h_{n-1}(x))=h_{n-2}(f(x))$, and by our inductive
assumption for $f(x)$, we also have
\[ \big| h_{n-1} \big( f(x) \big) -  h_{n-2} \big( f(x) \big) \big|
< t \mu_{n-1} \big( f(\zeta) \big) \nu\big( f(\zeta) \big),\] 
proving~\eqref{eq:hntemp}.
Moreover, $h_{n-1}(f(x))$ and $g(h_{n-1}(x))$ both lie in
$D(f(\zeta), \nu(f(\zeta)))$, and hence we may apply $G_{\zeta}$.
Recalling that $G_{\zeta}$ scales distances by $(f^{\nat}(\zeta))^{-1}$, we have
\begin{equation}
\label{eq:hnclose}
\big| h_n(x) - h_{n-1}(x) \big| < t (f^{\nat}(\zeta))^{-1}
\mu_{n-1} \big( f(\zeta) \big) \nu\big( f(\zeta) \big) = t \mu_n(\zeta) \nu(\zeta), 
\end{equation}
giving inequality~\eqref{eq:hncauchy}. The first part of the third bullet point
then follows from this bound together with the inductive assumption, because
\begin{align*}
\big| h_n(x) -x \big|
& \leq \max \big\{ \big|h_n(x) - h_{n-1}(x)\big| , \big| h_{n-1}(x) - x \big| \big\}
\\
& < \max \big\{ t \mu_n(\zeta) \nu(\zeta) , t \mu_1(\zeta)\nu(\zeta) \big\} = t \mu_1(\zeta)\nu(\zeta), 
\end{align*}
where the final equality is by Lemma~\ref{lem:KL3new}(b).

As for the second bullet point, that $h_n$ is an isometry on $f^{-n}(\bOmega)\cap\Cv$,
consider arbitrary $x,y\in f^{-n}(\bOmega)\cap\Cv$. Then there exist $\zeta,\xi\in\JBerkf^n$
such that $x\in D(\zeta,\mu_n(\zeta)\nu(\zeta))$ and $y\in D(\xi,\mu_n(\xi)\nu(\xi))$, by Lemma~\ref{lem:KL4new}.
Without loss, $\mu_n(\zeta)\nu(\zeta)\geq \mu_n(\xi)\nu(\xi)$.

If $|x-y|<\mu_n(\zeta)\nu(\zeta)$, then we have $|h_n(x)-h_n(y)|=|x-y|$ by~\eqref{eq:hnisom}.
Otherwise,
\[ \big| h_n(x) - h_{n-1}(x) \big|
< t \mu_n(\zeta)\nu(\zeta) < |x-y| = \big| h_{n-1}(x) - h_{n-1}(y) \big|, \] 
and similarly for $\big| h_n(y) - h_{n-1}(y) \big|$,
where the first inequality is by~\eqref{eq:hnclose}, and the equality is
by our inductive assumption. Thus,
\begin{align*}
\big| h_n(x) - h_n(y) \big| & = \Big|
\big( h_n(x) - h_{n-1}(x) \big) - \big( h_n(y) - h_{n-1}(y) \big) + h_{n-1}(x) - h_{n-1}(y)
\Big| \\
& = \big| h_{n-1}(x) - h_{n-1}(y) \big| = |x-y|,
\end{align*}
as desired.

It remains to show that $h_n$ maps $f^{-n}(\bOmega)$ bijectively onto $g^{-n}(\bOmega)$.
Because $h_n$ is given locally by power series, it suffices to show that $h_n$ maps
$f^{-n}(\bOmega)\cap\Cv$ bijectively onto $g^{-n}(\bOmega)\cap\Cv$.

Since $h_n$ is an isometry, we already know it is injective on $f^{-n}(\bOmega)\cap\Cv$.
In addition, for any $x\in f^{-n}(\bOmega)\cap\Cv$, we have $f(x)\in f^{-(n-1)}(\bOmega)$,
and therefore by our inductive assumption, we also have
\[ h_{n-1}\big( f(x)\big )\in g^{-(n-1)}(\bOmega). \]
Since each map $G_{\zeta}$ is a local inverse of $g$, it follows that
$h_n(x) \in g^{-n}(\bOmega)$.

Finally, given $y \in g^{-n}(\bOmega)\cap\Cv$, we have $g(y)\in g^{-(n-1)}(\bOmega)$,
and hence there is some $\tilde{x}\in f^{-(n-1)}(\bOmega)$ such that
$h_{n-1}(\tilde{x})=g(y)$, by our inductive assumption.
By Lemma~\ref{lem:KL4new}, there is some $\zeta\in\JBerkf^{n-1}$ such that
$\tilde{x}\in D(\zeta, \mu_{n-1}(\zeta) \nu(\zeta))$. 
Writing $f^{-1}(\zeta) = \{\theta_1,\ldots, \theta_d\}$, each disk
\[ D\big(\theta_i, \mu_1(\theta_i)\nu(\theta_i)\big)
\quad\text{ maps bijectively onto }\quad D\big(\zeta , \nu(\zeta)\big) \]
under both $f$ and $g$, by Lemmas~\ref{lem:SL1new} and~\ref{lem:inj}.
Moreover, because
\[ \big| g(y) - \tilde{x} \big| = \big| h_{n-1}(\tilde{x}) - \tilde{x} \big| < t \mu_1(\zeta)\nu(\zeta) < \mu_1(\zeta)\nu(\zeta)\] 
by our inductive assumption, we have
$g(y)\in D(\zeta, \mu_1(\zeta) \nu(\zeta))$.
Therefore, there is some $j\in\{1,\ldots, d\}$ such that
$y\in D(\theta_j, \nu(\theta_j))$, and there is some $x\in D(\theta_j, \nu(\theta_j))$
such that $f(x)=\tilde{x}$. Since $\tilde{x}\in f^{-(n-1)}(\bOmega)$,
we have $x\in f^{-n}(\bOmega)$.
Writing $\theta:=\theta_j$, we have $G_{\theta}(g(y))=y$, and hence $h_n(x)=y$.
Thus, $h_n$ does indeed map $f^{-n}(\bOmega)\cap\Cv$ bijectively onto
$g^{-n}(\bOmega)\cap\Cv$, completing our induction.

\medskip

\textbf{Step 3}.
For each $n\in\NN_0$, define $H_n:\PBerk\to\PBerk$ by the following inductive
procedure. Let $H_0=h_0$, and for $n\in\NN$ and $\zeta\in\PBerk$, let
\[ H_n(\zeta) := \begin{cases}
H_{n-1}(\zeta) & \text{ if } \zeta\in\PBerk\smallsetminus f^{-n}(\bOmega),
\\
h_n(\zeta) & \text{ if } \zeta\in f^{-n}(\bOmega).
\end{cases} \]
Define $h:\PBerk\to\PBerk$ by
\[ h(\zeta) := \lim_{n\to\infty} H_n(\zeta), \]
or equivalently
\[ h(\zeta) = \begin{cases}
\zeta & \text{ if } \zeta\in\PBerk\smallsetminus f^{-1}(\bOmega), \\
h_n(\zeta) & \text{ if } \zeta\in f^{-n}(\bOmega)\smallsetminus f^{-(n+1)}(\bOmega)
\text{ for } n\in\NN,
\\
\lim_{n\to\infty} h_n(\zeta) & \text{ if } \zeta\in  \bigcap_{n\in\NN} f^{-n}(\bOmega).
\end{cases} \]
For the third case, recall from Lemma~\ref{lem:KL5} that
$\bigcap_{n\in\NN_0} f^{-n}(\bOmega) = \JBerkf\cap\Cv$,
and that  $\lim_{n\to\infty} \mu_n(\zeta)=0$ for such $\zeta$.
Thus, by the third bullet point of Step~2, the sequence $\{h_n(\zeta)\}_{n=0}^{\infty}$
is Cauchy and hence converges to $h(\zeta)\in\bOmega\cap\Cv$.
Together with Lemmas~\ref{lem:KL4new} and~\ref{lem:inj}, as well as the first bullet point of Step~2,
it follows that $h$ is indeed a function from $\PBerk$ to itself.
Moreover, by the second bullet point of Step~2,
$h$ maps $f^{-n}(\bOmega)\cap\Cv$ bijectively onto $g^{-n}(\bOmega)\cap\Cv$ for each $n\in\NN_0$.

We claim that $h$ is an isometry on $\Cv$. To see this, given $x,y\in\Cv$,
we consider several cases. First, if $x,y\in\JBerkf$, then
\[\big|h(x) -h(y)\big| = \Big| \lim_{n\to\infty} h_n(x) - h_n(y) \Big|
= \lim_{n\to\infty} \big| h_n(x) - h_n(y) \big| = \lim_{n\to\infty} |x-y| = |x-y|, \]
where the third equality is because $h_n$ is an isometry on $f^{-n}(\bOmega)\cap\Cv$, by Step~2.
Second, if $x,y\in f^{-n}(\bOmega)\smallsetminus f^{-(n+1)}(\bOmega)$
for some $n\in\NN$, or if $x,y\in\PBerk\smallsetminus f^{-1}(\bOmega)$ with $n=0$, then
\[ \big|h(x) -h(y)\big| = \big| h_n(x) - h_n(y) \big| = |x-y|. \]
Finally, suppose there is some $n\in\NN_0$ such that 
\begin{equation}
\label{eq:diffparts}
x\in \begin{cases}
\PBerk\smallsetminus f^{-1}(\bOmega) & \text{ if } n=0,
\\
f^{-n}(\bOmega)\smallsetminus f^{-(n+1)}(\bOmega) & \text{ if } n\geq 1,
\end{cases}
\end{equation}
and $y\in f^{-(n+1)}(\bOmega)$.
Then for every $m>n$
for which $y\in f^{-m}(\bOmega)$, there is some $\zeta_m\in\JBerkf^m$
such that $y\in D(\zeta_m, \mu_m(\zeta_m)\nu(\zeta_m))$.
For any integer $\ell$ with $n<\ell\leq m$, we have $\zeta_m \in \JBerkf^{\ell}$.
By Lemma~\ref{lem:KL3new}(b), we also have
$y \in D(\zeta_m, \mu_{\ell}(\zeta_m)\nu(\zeta_m))$.
Thus, it follows from the third bullet point of Step 2 that
\[
    \big|h_{\ell}(y) - h_{\ell-1}(y)\big| < \mu_{\ell}(\zeta_m)\nu(\zeta_m). 
\]
On the other hand, it follows from our assumption~\eqref{eq:diffparts}
that $|x - y| \ge \mu_{n+1}(\zeta_m) \nu(\zeta_m)$. Therefore,
\begin{align*}
    \big|h_m(y) - h_{n}(y)\big| 
    &\le \max \big\{ \big|h_{\ell}(y) - h_{\ell-1}(y)\big| \big\}
    < \max \{ \mu_{\ell}(\zeta_m) \nu(\zeta_m) \} \\
    &= \mu_{n+1}(\zeta_m) \nu(\zeta_m)
    \le |x-y|
    = \big|h_n(x) - h_n(y)\big|        
\end{align*}
where the two maxima are over $\ell\in\{n+1,\ldots,m\}$,
and where the first equality is by Lemma~\ref{lem:KL3new}(b).
Hence,
\begin{equation}
\label{eq:Hm}
\big| H_m(x) - H_m(y) \big| = \big| h_n(x) - h_m(y) \big| = \big| h_n(x) - h_n(y) \big| = |x-y|.
\end{equation}
If $y\in\JBerkf$, we obtain $|h(x)-h(y)|=|x-y|$ by taking the limit as $m\to\infty$
in~\eqref{eq:Hm}.
Otherwise, we obtain $|h(x)-h(y)|=|x-y|$ by choosing $m$ in~\eqref{eq:Hm} to be the
largest integer for which $y\in f^{-m}(\bOmega)$.

Next, we claim that 
\begin{equation}
\label{eq:funcl}
h\big(f(\zeta) \big) = g\big( h(\zeta) \big) \quad\text{for all } \zeta\in f^{-1}(\bOmega).
\end{equation}
To see this, suppose first that $\zeta\in f^{-n}(\bOmega)\smallsetminus f^{-(n+1)}(\bOmega)$
for some $n\in\NN$. Then $h(\zeta)=h_n(\zeta)$, and $h(f(\zeta))=h_{n-1}(f(\zeta))$.
Hence, by the construction of $h_n$ in Step~1, we have
\[ g\big(h(\zeta)\big) = g\big( h_n (\zeta) \big) = h_{n-1}\big(f(\zeta) \big) = h\big(f(\zeta)\big). \]
The only other possibility is that $\zeta\in\JBerkf\cap\Cv$,
in which case $\zeta,f(\zeta)\in f^{-n}(\bOmega)$ for all $n\in\NN_0$. Therefore,
\[ g\big(h(\zeta)\big) = g\Big( \lim_{n\to\infty} h_n (\zeta) \Big)
= \lim_{n\to\infty} g\big( h_n (\zeta) \big)
= \lim_{n\to\infty} h_{n-1}\big(f(\zeta) \big) = h\big(f(\zeta)\big), \]
proving our claim.

\medskip

\textbf{Step 4}. Our goal in this step is to show that $h:\PBerk\to\PBerk$ is a homeomorphism.
We already know that $h$ fixes every point of $\PBerk\smallsetminus f^{-1}(\bOmega)$
and maps $f^{-1}(\bOmega)$ bijectively onto itself.
It follows that $h:\PBerk\to\PBerk$ is bijective.
Since $\PBerk$ is a compact Hausdorff space, it suffices to show that $h^{-1}$ is continuous.

To that end, we first recall that for every $n\in\NN$ and every $\zeta\in\JBerkf^n$,
both $h_{n-1}$ and $h_n$ are power series convergent on $\DBerk(\zeta,\mu_n(\zeta)\nu(\zeta))$
with Weierstrass degree~1.
Therefore, it is immediate from inequality~\eqref{eq:hncauchy}, along with the fact that $h_{n}$ is an isometry
on the type~I points, that
\[h_{n-1}\Big(\DBerk\big(\zeta,\mu_n(\zeta)\nu(\zeta)\big)\Big)
= h_{n}\Big(\DBerk\big(\zeta,\mu_n(\zeta)\nu(\zeta)\big)\Big)
\quad\text{for all } \zeta\in\JBerkf^n.\] 
By the definition of $H_n:\PBerk\to\PBerk$ from Step~3, it follows that
\begin{equation}
\label{eq:samedisk}
H_{n-1}\Big(\DBerk\big(\zeta,\mu_n(\zeta)\nu(\zeta)\big)\Big)
= H_{n}\Big(\DBerk\big(\zeta,\mu_n(\zeta)\nu(\zeta)\big)\Big)
\quad\text{for all } \zeta\in\JBerkf^n .
\end{equation}

Second, we claim that for every $a\in\Cv$, every $r>0$, and every $n\in\NN_0$, we have
\begin{equation}
\label{eq:samedisk2}
H_n\big( \DbarBerk(a,r) \big) = \DbarBerk\big( H_n(a),r\big)
\quad\text{and}\quad H_n\big( \DBerk(a,r) \big) = \DBerk\big( H_n(a),r\big) .
\end{equation}
We prove equation~\eqref{eq:samedisk2} by induction on $n$;
it is clearly true for $n=0$, since $H_0$ is the identity map.
For $n\in\NN$, assuming equation~\eqref{eq:samedisk2} holds for $H_{n-1}$, we now show it for $H_n$.
Let $X$ be the disk $\DbarBerk(a,r)$ or $\DBerk(a,r)$.
If $X$ does not intersect $\DBerk(\zeta,\mu_n(\zeta)\nu(\zeta))$ for any $\zeta\in\JBerkf^n$,
then $X\cap f^{-n}(\bOmega)=\varnothing$ by Lemma~\ref{lem:KL4new},
so that $H_n(X)=H_{n-1}(X)$
Similarly,
if there are any points $\zeta\in\JBerkf^n$ for which $\DBerk(\zeta,\mu_n(\zeta)\nu(\zeta))\subseteq X$,
then by equation~\eqref{eq:samedisk} and the fact that $H_{n-1}$ and $H_n$ agree outside $f^{-n}(\bOmega)$, we again have $H_n(X)=H_{n-1}(X)$.
In either case, equation~\eqref{eq:samedisk2} follows immediately.
The only remaining case is that $X\subseteq\DBerk(\zeta,\mu_n(\zeta)\nu(\zeta))$ for some $\zeta\in\JBerkf^n$.
In that case, $H_n|X=h_n|X$ is a power series convergent on the disk $X$ which is an isometry on the type~I points,
and hence equation~\eqref{eq:samedisk2} holds, proving our claim.

Third, we make the same claim for $h$: that for every $a\in\Cv$ and $r>0$, we have 
\begin{equation}
\label{eq:samedisk3}
h\big( \DbarBerk(a,r) \big) = \DbarBerk\big( h(a),r\big)
\quad\text{and}\quad h\big( \DBerk(a,r) \big) = \DBerk\big( h(a),r\big) .
\end{equation}
Let $X$ be $\DbarBerk(a,r)$ or $\DBerk(a,r)$,
and let $Y$ be $\DbarBerk(h(a),r)$ or $\DBerk(h(a),r)$, respectively.
If there is some $n\in\NN_0$ such that $X\cap f^{-n}(\bOmega)=\varnothing$,
then $h(X)=H_n(X)$, and we are done by equation~\eqref{eq:samedisk2}.
Otherwise, by Lemma~\ref{lem:KL4new},
for each $n\in\NN_0$, there is some $\zeta_n\in\JBerkf^n$ such that
$X\cap \DBerk(\zeta_n,\mu_n(\zeta_n)\nu(\zeta_n))\neq\varnothing$.
If $X\subseteq\DBerk(\zeta_n,\mu_n(\zeta_n)\nu(\zeta_n))$ for each $n$,
then $X\subseteq\Cv$ by Lemma~\ref{lem:KL5}, contradicting the fact that
the Berkovich disk $X$ contains points of type~II, for example.

Thus, there must be some $m\in\NN_0$ such that $X\supseteq\DBerk(\zeta_m,\mu_m(\zeta_m)\nu(\zeta_m))$.
By equation~\eqref{eq:samedisk2} again, we have that $H_n(X)=Y$ for every $n\geq m$.
To prove the current claim, then, it suffices to show,
for every $\xi\in\PBerk$, that $\xi\in X$ if and only if there is some $j\geq m$ such that $h(\xi)\in H_j(X)$.

Consider an arbitrary point $\xi\in\PBerk$. If there is some $j\in\NN_0$ such that $\xi\not\in f^{-j}(\bOmega)$, 
then $h(\xi)=H_j(\xi)$ by the definitions of $h$ and $H_j$, so that $\xi\in X$ if and only if $h(\xi)\in H_j(X)$.
Otherwise, $\xi\in\bigcap_{n\in\NN_0} f^{-n}(\bOmega)$. Therefore, by Lemma~\ref{lem:KL5},
we have $\xi\in\JBerkf\cap\Cv$, with $\lim_{n\to\infty} \mu_n(\xi)=0$.
Hence, there is some $j\geq m$ such that $\mu_j(\xi)\nu(\xi)< r$.
By equation~\eqref{eq:hncauchy} and the fact that $h(\xi)=\lim_{n\to\infty} h_n(\xi)$, we have
\[ \big| h(\xi) - H_j(\xi) \big| = \big| h(\xi) - h_j(\xi) \big|
= \bigg| \sum_{n=j}^{\infty} \big( h_{n+1}(\xi) - h_n(\xi) \big) \bigg| < r. \]
Therefore, $\xi\in X$ if and only if $h(\xi)\in H_j(X)$, completing
our proof our claimed equation~\eqref{eq:samedisk3}.

We are now prepared to show that $h^{-1}$ is continuous, and hence that $h$ is a homeomorphism.
For any connected open affinoid $V\subseteq\PBerk$,
it suffices to show that $h(V)$ is also open in $\PBerk$.
Write
\[ V= \PBerk\smallsetminus \big( \DbarBerk(a_1,r_1) \cup \cdots \cup \DbarBerk(a_\ell,r_\ell) \big) \]
or
\[ V = \DBerk(b,s) \smallsetminus \big( \DbarBerk(a_1,r_1) \cup \cdots \cup \DbarBerk(a_\ell,r_\ell) \big). \]
By equation~\eqref{eq:samedisk3} and the fact that $h$ is bijective, we have
\[ h(V)= \PBerk\smallsetminus
\Big( \DbarBerk\big(h(a_1),r_1\big) \cup \cdots \cup \DbarBerk\big(h(a_\ell),r_\ell\big) \Big)\]
or
\[ h(V) = \DBerk\big(h(b),s\big) \smallsetminus
\Big( \DbarBerk\big(h(a_1),r_1\big) \cup \cdots \cup \DbarBerk\big(h(a_\ell),r_\ell\big) \Big), \]
respectively. Either way, $h(V)$ is a connected open affinoid,
completing our proof that $h$ is a homeomorphism.

\medskip

\textbf{Step 5}.
We have shown that $h:\PBerk\to\PBerk$ is a homeomorphism,
mapping $\Cv$ bijectively and isometrically onto itself, 
and satisfying the conjugacy formula~\eqref{eq:funcl} on $f^{-1}(\bOmega)$.
Moreover, $f^{-1}(\bOmega)=g^{-1}(\Omega)$ by the final statement of Lemma~\ref{lem:inj}.
We now extend the conjugacy to $f^{-1}(\bOmega)\cup\JBerkf$,
and we show that $h(\JBerkf)=\JBerkg$.

To this end, we first claim that
\begin{equation}
\label{eq:fgbig}
f(\zeta)=g(\zeta) \quad\text{for all } \zeta\in f^{-1}\big(\DbarBerk(0,1)\big)
\text{ with } \diam\big(f(\zeta)\big)> t \delta^2\epsilon 
\end{equation}
where $0<t<1$ is the constant we fixed at the start of Step~1.
To see this, consider an arbitrary such point $\zeta$.
The subset $f^{-1}(\Dbar(0,1))$ of type~I points is dense in $f^{-1}(\DbarBerk(0,1))$,
whence there is a sequence $\{x_i\}_{i=0}^{\infty}\subseteq f^{-1}(\Dbar(0,1))$
such that $\lim_{i\to\infty} x_i=\zeta$. For each such type~I point $x_i$, we have
$|g(x_i)-f(x_i)| < t \delta^2\epsilon$, by Lemma~\ref{lem:nbrhd}(b). Therefore, 
\[ \big\| g - f \big\|_{\zeta} = \lim_{i\to\infty} \big\|g-f\big\|_{x_i} = \lim_{i\to\infty} \big|g(x_i)-f(x_i)\big|
\leq t\delta^2\epsilon < \diam\big( f(\zeta)\big). \] 
Thus, for any $a\in\Cv$, we have
\[ \big\| f(z)-a \big\|_\zeta \geq \diam\big( f(\zeta) \big) > \big\|g-f\big\|_{\zeta}, \]
and hence
\[ \| z-a\|_{g(\zeta)} = \big\| g(z) - a \big\|_{\zeta}
= \big\| \big( g(z) - f(z) \big)+ \big( f(z) - a \big) \big\|_{\zeta} = \big\| f(z)-a \|_\zeta = \|z-a\|_{f(\zeta)} .\]
Since this is true for all $a\in\Cv$, we have $f(\zeta)=g(\zeta)$ by \cite[Lemma~15.2(d)]{BenBook},
proving our claim.

Consider an arbitrary point $\zeta\in\JBerkf \smallsetminus f^{-1}(\bOmega)$. Then 
$f(\zeta)\in\JBerkf\smallsetminus\bOmega$, and in particular $f(\zeta)\in\JBerkf^+$. Hence,
\[ \diam\big( f(\zeta) \big) = \sphdiam\big( f(\zeta) \big) \geq \nu\big( f(\zeta) \big) > t\delta^2\epsilon ,\] 
where the equality is because $\JBerkf\subseteq\DbarBerk(0,1)$, the first inequality
is because $f(\zeta)\in\JBerkf^+$, and the second is by Lemma~\ref{lem:KL1new}(a).
Therefore, by the claim of~\eqref{eq:fgbig}, we have
\begin{equation}
\label{eq:Jplus}
f(\zeta)=g(\zeta) \quad\text{for all } \zeta\in\JBerkf \smallsetminus f^{-1}(\bOmega) .
\end{equation}

Finally, recall that $\JBerkf$ is a nonempty compact set, and hence so is its homeomorphic image $h(\JBerkf)$.
In addition, the functions $h\circ f$ and $g\circ h$ coincide on $\JBerkf$, whence
the functions $h\circ f \circ h^{-1}$ and $g$ coincide on $h(\JBerkf)$. Thus,
\[ g^{-1}\big(h(\JBerkf)\big) = h\big( f^{-1} (\JBerkf) \big) = h(\JBerkf). \]
Therefore, by \cite[Theorem~8.15(d)]{BenBook}, it follows that $h(\JBerkf)\supseteq\JBerkg$,
since $h(\JBerkf)$ is closed in $\PBerk$.
Furthermore, because of this inclusion,
we have $h^{-1}\circ g = f\circ h^{-1}$ on $\JBerkg$,
and hence we may apply the same argument to the image of the compact set $\JBerkg$
under the homeomorphism $h^{-1}$, to obtain $h^{-1}(\JBerkg)\supseteq\JBerkf$,
or equivalently, $h(\JBerkf)\subseteq\JBerkg$. Combining these two inclusions
yields the desired equality $h(\JBerkf) = \JBerkg$.

\medskip 

\textbf{Step~6}.
It remains to show that $h$ varies continuously with $g$.
More precisely, for any $q\in W$, write $h^q$ for the map $h$ constructed
in Steps~1--3 for $q$ in place of $g$,
and let $h^q_n$ denote the auxiliary functions
constructed along the way. We wish to show that
\[ \Lambda: (q,\zeta)\mapsto h^q(\zeta) \]
is a continuous function from $W\times\PBerk$ to $\PBerk$.
To this end,
given any $\xi\in\PBerk$ and an open connected affinoid $U\subseteq\PBerk$
with $h^g(\xi)\in U$, we will find an open set
$W'\subseteq W$ containing $g$ 
and an open connected affinoid $V\subseteq\PBerk$
containing $\xi$
such that $\Lambda$ maps $W'\times V$ into $U$.

Since $h=h^g:\PBerk\to\PBerk$ is a homeomorphism satisfying
equation~\eqref{eq:samedisk3}, we may write $U$ as one of the two
forms $h(V)$ given at the end of Step~4,
and then define $V:=h^{-1}(U)$ as in the same step.
Let $u:=\min\{r_1,\ldots, r_{\ell}, 1/2 \}$, so that $0<u<1$.
Define $W_u(g)\subseteq\Rat_d(\Cv)$ to be the neighborhood $W$ of $g$
given by Lemma~\ref{lem:nbrhd} for $g$ in place of $f$,
with $\gamma=u\delta^2\epsilon$.
Let $W':= W\cap W_u(g)$. For each $(q,\zeta)\in W'\times V$,
we must show $h^q(\zeta)\in U$.
By the continuity of $h^q$ (from Step~4), we may assume that $\zeta = x$
lies in $V\cap\Cv$.
Because each $x\in V\cap\Cv$ has $h(x)\in U$,
with $D(h(x),u)\subseteq U$ and $D(x,u)\subseteq V$,
it suffices to show that
\begin{equation}
\label{eq:Lcont}
\big|h^q(x) - h(x)\big| < u
\quad\text{for all } (q,x)\in W' \times \Cv .
\end{equation}

For each $q\in W'$,
consider the homeomorphism
$h^{q,g}:\PBerk\to\PBerk$ constructed according to Steps~1--3
when using $g$ in place of $f$ as the original function.
Let $G_\zeta^q:\DBerk(g(\zeta),\nu(g(\zeta)))\to \DBerk(\zeta,\mu_1(\zeta)\nu(\zeta))$
and $h_n^{q,g}:g^{-n}(\bOmega)\to q^{-n}(\bOmega)$ denote the
auxiliary functions constructed along the way.
We claim that 
\begin{equation}
\label{eq:hqginduc}
h_n^q = h_n^{q,g} \circ h_n
\quad\text{for all } n\in\NN_0 ,
\end{equation}
which we now prove by induction.
Equation~\eqref{eq:hqginduc} certainly holds for $n=0$
because all three maps are the identity map.
For $n\geq 1$, assuming the equation holds for $n-1$,
then on any disk $D(\zeta,\mu_1(\zeta)\nu(\zeta))$,
we have
\[ h_n^{q,g} \circ h_n =
G_{\zeta}^q \circ h_{n-1}^{q,g} \circ g
\circ G_{\zeta} \circ h_{n-1} \circ f
= G_{\zeta}^q \circ h_{n-1}^{q,g} \circ h_{n-1} \circ f
= G_{\zeta}^q \circ h_{n-1}^{q} \circ f = h_n^q, \]
where the second equality is because $G_{\zeta}$
is a local inverse of $g$, and the third is by
our inductive assumption.
Having proven the claim of equation~\eqref{eq:hqginduc},
it follows immediately that $h^q = h^{q,g} \circ h$.

On the other hand, by the third bullet point of Step~2
--- still with $g$, $q$, and $u$ in place of $f$, $g$, and $t$, respectively ---
we have
$|h^{q,g}_n(y)-y| < u \mu_1(\zeta) \nu(\zeta) < u$
for every $n\in\NN$, every $\zeta\in\JBerkg^n$,
and every $y\in D(\zeta,\mu_n(\zeta)\nu(\zeta))$.
The construction of $h^{q,g}$ in Step~3 therefore yields
\[ \big|h^{q,g}(y)-y\big| < u \quad\text{for all } y\in\Cv .\]
Applying this bound to $y:=h(x)$, along with the identity
$h^q=h^{q,g}\circ h$ that followed from equation~\eqref{eq:hqginduc}, we have
\[ \big| h^q(x) - h(x) \big| = 
\big| h^{q,g}\big( h(x) \big) - h(x) \big| < u
\quad\text{for all } x\in\Cv. \]
We have proven the bound~\eqref{eq:Lcont}, and hence $\Lambda$
is indeed continuous, i.e., $h$ varies continuously with $g$.
\end{proof}


\section{Examples}
\label{sec:ex}

We now present examples of rational functions satisfying the hypotheses of Theorem~\ref{MAIN}
but which are not expanding in the sense of equation~\eqref{eq:expand}.


\begin{example}
\label{EX1}
Assume the residue characteristic of $\Cv$ is $0$, and fix $c \in \Cv$ with $0 < |c| < 1$.
Define
\[ f(z) := \frac{ (z+c)(z + 1) }{ z }  = z + (c+1) + \frac{c}{z} \in \Cv(z), \]
which is a rational function of degree $d=2$.
A straightforward calculation shows that
\[ \big| f(x) - (x+1) \big| < 1 \quad \text{for all } x\in\Cv \text{ with } |x|\geq 1, \]
and therefore
\begin{equation}
\label{eq:plus1}
f \text{ maps } \DBerk(x,1) \text{ bijectively onto } \DBerk(x+1,1) \quad \text{for all } x\in\Cv \text{ with } |x|\geq 1.
\end{equation}
It follows that $\PBerk\smallsetminus\DbarBerk(0,1)\subseteq\FBerkf$, and that
$\DbarBerk(n,1)\subseteq \FBerkf$ for every positive integer $n\in\NN$.
Further simple calculations show that
\[ f\big( \DBerk(0,|c|) \big) \subseteq \PBerk\smallsetminus\DbarBerk(0,1)
\quad\text{and}\quad
f\big( \DBerk(0,1)\smallsetminus \DbarBerk(0,|c|) \big) \subseteq \DBerk(1,1). \]

Combining these facts, it follows that
\begin{equation}
\label{eq:Jex1}
\JBerkf\subseteq \big\{ \zeta\in\PBerk \, \big| \, |\zeta|=1 \text{ or } |\zeta|=|c| \big\} .
\end{equation}
Conversely, $\JBerkf$ is nonempty, and by \cite[Theorem~7.34]{BenBook}, we have $f(\DBerk(0,1))=\PBerk$.
Therefore, by~\eqref{eq:plus1}, a simple induction shows
\[ \DBerk(-n,1) \cap\JBerkf\neq\varnothing \quad\text{for all } n\in\NN .\]
Thus, $f$ is \emph{not} expanding in the sense of equation~\eqref{eq:expand},
since for any $n\in\NN$, there is some $\zeta\in\DBerk(-n,1)\cap\JBerk$,
but equation~\eqref{eq:plus1} together with Lemma~\ref{lem:injdiam}
shows that $(f^i)^{\nat}(\zeta)=1$ for all $0\leq i\leq n$.

On the other hand, we have $f(cw) = w^{-1} + 1 + c(w+1)$, and hence 
\begin{equation}
\label{eq:xtocoverx}
f \text{ maps } \DBerk(x,|c|) \text{ bijectively onto } \DBerk\bigg(\frac{c}{x}+1,1\bigg)
\quad \text{for all } x\in\Cv \text{ with } |x|=|c|, 
\end{equation} 
whence $f^{\nat}(\zeta)=|c|^{-1}$ for all $\zeta\in\PBerk$ with $|\zeta|=|c|$.
Combining this fact with equations~\eqref{eq:plus1} and~\eqref{eq:Jex1},
as well as Lemma~\ref{lem:injdiam} again, shows that
\[ \big(f^n\big)^{\nat}(\zeta)\geq 1 \quad\text{for all } \zeta\in\JBerkf \text{ and } n\in\NN . \]
That is, even though $f$ is not expanding, it satisfies the hypotheses of Theorem~\ref{MAIN} and hence is $J$-stable
in the moduli space $\Rat_2$.
\end{example}


\begin{example}
Choose an integer $m\geq 2$ such that $|m|=1$, i.e., such that
$m$ is not divisible by the residue characteristic of $\Cv$.
Fix $c \in \Cv$ with $0 < |c| < 1$.
Define
\[ f(z) := cz^{m+1} + z^{m}  = z^m (cz+1) \in \Cv[z], \]
which is a polynomial of degree $d=m+1\geq 3$. Then
\[ |f(x)| = |c| |x|^{m+1} > |x| \quad \text{for all } x\in\Cv \text{ with } |x|>|c|^{-1} \]
and
\[ |f(x)| = |x|^{m} < |x| \quad \text{for all } x\in\Cv \text{ with } |x|<1. \]
It follows that $f$ maps both $\PBerk\smallsetminus\DbarBerk(0,|c|^{-1})$ and $\DBerk(0,1)$
into themselves, and hence
\[ \DBerk(0,1) \cup \big( \PBerk\smallsetminus\DbarBerk(0,|c|^{-1}) \big) \subseteq\FBerkf. \]
Furthermore, it is not difficult to check that
\begin{equation}
\label{eq:invimf}
f^{-1}\big(\Dbar(0,|c|^{-1}) \big) \subseteq \Dbar(0,|c|^{-1/m}) \cup \Dbar(-c^{-1}, |c|^{m-2} ) 
\end{equation}
by writing $f(z)=cz^m(z+c^{-1})$. (In fact, we have equality in \eqref{eq:invimf}.)

Therefore, we have $\JBerkf\subseteq X\cup Y$, where
\[ X:= \big\{ \zeta\in\PBerk \, \big| \, 1\leq |\zeta|\leq |c|^{-1/m} \big\}
\quad\text{and}\quad Y:=\DbarBerk(-c^{-1}, |c|^{m-2}) .\]
For any $x\in X\cap\Cv$, writing $f$ as a power series centered at $x$,
it is straightforward to check that $f$ maps $D(x,|x|)$ bijectively onto $D(x^m,|x|^m)$.
Thus, (the proof of) Lemma~\ref{lem:injdiam} shows that for any $\zeta\in X$, we have
\[ f^{\nat}(\zeta) = \frac{|\zeta|^m}{|\zeta|} \cdot \frac{\max\{1,|f(\zeta)|^2\} }{\max\{ 1, |\zeta|^2 \} }
\geq \frac{|f(\zeta)|^2}{|\zeta|^2} . \]
Similarly, because $f$ maps the disk $Y$ (of diameter $|c|^{m-2}$) bijectively onto $\DbarBerk(0, |c|^{-1})$,
Lemma~\ref{lem:injdiam} shows that for any $\zeta\in Y$ with $f(\zeta)\in X\cup Y$, we have
\[ f^{\nat}(\zeta) = \frac{ |c|^{-1} }{ |c|^{m-2}} \cdot \frac{\max\{1,|f(\zeta)|^2\} }{\max\{ 1, |\zeta|^2 \} }
= |c|^{1-m} \frac{|f(\zeta)|^2}{|\zeta|^2} \geq \frac{|f(\zeta)|^2}{|\zeta|^2} . \]
Combining these two bounds, and using the fact that $\JBerkf\subseteq X\cup Y$,
we have
\[ \big( f^n \big)^{\nat}(\zeta) \geq \frac{|f^n(\zeta)|^2}{|\zeta|^2} \geq |c|^2
\quad\text{for all } \zeta\in\JBerkf \text{ and } n\in\NN .\]
Thus, $f$ satisfies the hypotheses of Theorem~\ref{MAIN} and hence is $J$-stable
in the moduli space $\Rat_{m+1}$.

On the other hand, the Newton polygon of the equation $f(z)-z=0$ reveals that $f$ has
a fixed point $a_0\in\Cv$ with $|a_0|=|c|^{-1}$. By inclusion~\eqref{eq:invimf},
we must have $a_0\in\Dbar(-c^{-1},|c|^{m-2})$. Since
\[ f'(z) = (m+1)c z^{m} + m z^{m-1} = z^{m-1} \big((m+1)c z + m \big), \]
we have $|f'(a_0)|=|c|^{1-m} > 1$, and hence $a_0$ is repelling and therefore lies in $\JBerkf$.

For each $b\in\Cv$ with $1<|b| < |c|^{-m}$, the Newton polygon of the equation $f(z)-b=0$ shows 
that $b$ has $m$ preimages $\alpha_1,\ldots,\alpha_m$ with $|\alpha_i|=|b|^{1/m}$.
Applying this fact inductively starting with $b=a_0$, and choosing only one such
preimage each time, there is an infinite sequence $\{a_n\}_{n=0}^{\infty}$ in $\Cv$ with
\[ |a_n|=|c|^{-1/m^n} \quad \text{and}\quad f(a_n)=a_{n-1} \quad\text{for all } n\in\NN .\]
Each point $a_n$ eventually maps to $a_0$ and hence lies in $\JBerkf$, with
\[ f^{\nat}(a_n) = | f'(a_n) | \cdot \frac{\max\{1,|f(a_n)|^2 \} }{\max\{1,|a_n|^2 \} }
= |a_n|^{m-1} \cdot \frac{ |a_n|^{2m} }{|a_n|^2} = |c|^{-3(m-1)/m^n} .\]
Thus,
\[ \big( f^n \big)^{\nat}(a_n) = \prod_{i=0}^{n-1} f^{\nat}(a_n) = |c|^{-e}, \]
where
\[ e = \frac{3(m-1)}{m^n} + \frac{3(m-1)}{m^{n-1}} + \cdots +  \frac{3(m-1)}{m}
= 3 \bigg( 1 - \frac{1}{m^n} \bigg) < 3.\]

Hence, $(f^n)^{\nat}(a_n)< |c|^{-3}$ for every $n\geq 1$,
and as in Example~\ref{EX1},
$f$ is \emph{not} expanding in the sense of equation~\eqref{eq:expand}.
\end{example}

\begin{remark}
\label{rem:expand}
Motivated by condition~\eqref{eq:expand}, let us call
a rational function $f : \PBerk \to \PBerk$ \emph{uniformly expanding} on its Julia set if there exist $c > 0$ and $\lambda > 1$ such that for any $n \in \NN$ and $\zeta \in \JBerkf$,
   $\sd{(f^n)}{\zeta} \ge c \lambda^n.$

Any uniformly expanding rational functions clearly satisfies the assumption of Theorem~\ref{MAIN} and hence is $J$-stable in the moduli space $\Rat_d$.
However, although this condition is appropriate in complex dynamics, 
the above examples show that uniform expansion is too restrictive
a condition in the non-archimedean setting.

In fact, any uniformly expanding rational function has
Julia set consisting only of type~I points, as we now prove.
Suppose there is $\zeta \in \JBerkf \cap \HBerk$.
Then by Proposition~\ref{prop:sphcomp}, we have
\[
    \frac{\sphdiam\big(f^n(\zeta)\big)}{\spd{\zeta}}
    = \sd{(f^n)}{\zeta} 
    \ge c \lambda^n
\]
for any $n \in \NN_0$.
Therefore,
\[
    \lim_{n \to \infty} \sphdiam\big(f^n(\zeta)\big) 
    \ge \spd{\zeta} \cdot \lim_{n \to \infty} c \lambda^n
    = \infty,
\]
contradicting the fact that $\spd{\xi} \in [0, 1]$ for any $\xi \in \PBerk$.
\end{remark}

\begin{remark} 
\label{rem:expandingrate}
In light of Remark~\ref{rem:expand}, one may ask whether
the bounded contraction hypothesis of Theorem~\ref{MAIN}
implies non-uniform exponential expansion
$(f^n)^{\nat}(\zeta)\geq c\lambda^n$ for type~I Julia
points $\zeta$, i.e., with $c$ and $\lambda$ depending
on $\zeta$. The answer is no, as we now illustrate
by revisiting the map $f(z)=(z+c)(z+1)/z$ of Example~\ref{EX1}.

Let $\{ N_i \}_{i = 1}^{\infty} \subseteq \mathbb{N}$ be any
sequence of positive integers, and define $\{M_i\}_{i=0}^{\infty}$
by $M_i:=N_1 + \cdots + N_i$. We now construct a nested sequence
$D_0\supsetneq D_1 \supsetneq D_2\supsetneq\cdots$ 
of Berkovich open disks satisfying 
\begin{itemize}
    \item $D_i$ is of the form $D_i=\DBerk(a_i,|c|^i)$
    for each $i\in\NN_0$,
    \item $f^{M_i}$ maps $D_i$ bijectively onto $D_{0}$
    for each $i\in\NN_0$, and
    \item $(f^{M_i})^{\nat}(\zeta) = |c|^{-i}$ for any
    $\zeta \in D_i$ and each $i\in\NN_0$.
\end{itemize}
To this end, we first define $D_0 := \DBerk(0, 1)$,
which clearly satisfies the above bullet points.
Proceeding inductively, having already constructed the disk
$D_{i-1}=\DBerk(a_{i-1},|c|^{i-1})$,
observe by equation~\eqref{eq:plus1} that
$f^{N_i-1}$ maps $\DBerk(1-N_i,1)$ bijectively onto $D_0$.
Moreover, by equation~\eqref{eq:xtocoverx}, $f$ maps
$\DBerk(x,|c|)\subsetneq D_0$ bijectively onto 
$\DBerk(1-N_i,1)$, where $x=-c/N_i$.
Thus, $f^{N_i}$ maps $\DBerk(x,|c|)$ bijectively onto $D_0$,
and we have $(f^{N_i})^{\nat}(\zeta)=|c|^{-1}$ for all $\zeta\in\DBerk(x,|c|)$,
by Lemma~\ref{lem:injdiam}.

Because $\DBerk(x,|c|)\subsetneq D_0$,
and $f^{M_{i-1}}$ maps $D_{i-1}$ bijectively onto $D_0$,
it follows from \cite[Proposition~3.20]{BenBook} that
there is an open disk $D_i\subseteq D_{i-1}$
such that $f^{M_i}=f^{N_i}\circ f^{M_{i-1}}$
maps $D_i$ onto $D_0$.
By Proposition~\ref{prop:chainrule} and our inductive assumptions, we have
$(f^{M_i})^{\nat}(\zeta)=|c|^{-i}$ for all $\zeta\in D_i$,
and by \cite[Proposition~3.20]{BenBook} again,
it follows that $D_i=\DBerk(a_i,|c|^i)\subsetneq D_{i-1}$
for some type~I point $a_i\in D_{i-1}$.
We have verified the bullet points above for $D_i$,
completing our inductive construction.

Because the radii of the disks
$D_0\supsetneq D_1 \supsetneq D_2\supsetneq\cdots$
decrease to $0$, their intersection is a single type~I point
$b\in\Cv$.
Any open set $U$ containing $b$ contains the disk $D_i$ for some $i$,
and by the bullet points, we have
$f^{1+M_i}(U)\supseteq f^{1+M_i}(D_i) =f(D_0)= \PBerk$.
(Recall that we verified the last equality in Example~\ref{EX1}.)
Thus, $b$ is a type~I point in $\JBerkf$.

However, if we had chosen the sequence
$\{ N_i \}_{i = 1}^{\infty}$ to increase very fast,
then the spherical derivatives $(f^n)^{\nat}(b)$ increase slowly.
For example, 
by choosing $N := \lceil|c|^{-1}\rceil \ge 2$, $N_1 := N$, 
and $N_{i+1} := N^{i + 1} - N^{i}$ for each $i \ge 2$, 
we obtain
\[ (f^{n})^{\nat}(b) \leq
    |c|^{- \lfloor\log_N(n) + 1 \rfloor} \leq |c|^{-1} n \]
for any $n \in \NN$, yielding only linear rather than exponential growth.
\end{remark}

\noindent
\textbf{Acknowledgments}.
The first author gratefully acknowledges the support of NSF grants DMS-150176 and DMS-2101925.
The second author was supported by JSPS KAKENHI Grant Number 16J01139
and the JSPS Postdoctoral Fellowship for Foreign Researchers.
This research started during the second author's stay in Amherst College.
The authors thank Laura DeMarco for helpful discussions.
We also thank the referees for their careful reading
of the paper and their suggestions, including
noting the continuous variation of the conjugacy $h$
in Theorem~\ref{MAIN}, and the questions that inspired
Remarks~\ref{rem:noperiodic} and \ref{rem:expandingrate}. 


\bibliographystyle{amsalpha}


\end{document}